\documentclass[12pt,reqno]{amsart}
\usepackage{amssymb,amsmath,amsthm}
\usepackage{color}
\usepackage{pdflscape}
\usepackage{longtable}
\usepackage{enumerate}
\usepackage{tikz}
\usepackage{float}
\usepackage[colorlinks=true,
linkcolor=blue,
urlcolor=blue,
citecolor=blue]{hyperref}
\usepackage[paper=portrait,pagesize]{typearea}
\usepackage{pdflscape}
\usepackage{amsaddr}
\usepackage[margin=1.9cm]{geometry}
\setcounter{MaxMatrixCols}{20}
\newtheorem{theorem}{Theorem}[section]
\newtheorem{lemma}[theorem]{Lemma}
\theoremstyle{definition}

\newtheorem{proposition}[theorem]{Proposition}
\newtheorem{conjecture}[theorem]{Conjecture}
\newtheorem{corollary}[theorem]{Corollary}

\newtheorem{remark}[theorem]{Remark}

\newtheorem*{notation}{Notation}

\newcommand{\itbf}[1]{{\bf{{\emph{{#1}}}}}}
\newcommand{\Aut}[1]{\operatorname{Aut}(#1)}
\newcommand{\End}[2]{\operatorname{End}_{#1}(#2)}
\newcommand{\sym}[1]{\operatorname{Sym}(#1)}
\newcommand{\alt}[1]{\operatorname{Alt}(#1)}

\begin{document}
	
	\title[The Terwilliger algebra of $\sym{7}$]{On the Terwilliger algebra of the group association scheme of the symmetric group $\sym{7}$}
	
	\author[A.~Herman, R.~Maleki]{Allen Herman$^*$ {and} Roghayeh Maleki}\thanks{$^*$ The first author's research is supported by a grant from NSERC}
	\address{Department of Mathematics and Statistics, University of Regina,\\ Regina, SK S4S 0A2, Canada}\email{Allen.Herman@uregina.ca, RMaleki@uregina.ca}
	
	\author[A.~S.~Razafimahatratra]{Andriaherimanana Sarobidy Razafimahatratra$^\dag$}\thanks{$^\dag$ The third author gratefully acknowledge that this research was supported by the Fields Institute for Research in Mathematical Sciences. } 
	
	\address{Fields Institute for Research in Mathematical Sciences,\\ 222 College St., Toronto, ON M5T 3J1, Canada}\email{sarobidy@phystech.edu}
	
	\date{\today}
	
	\begin{abstract} 
	Terwilliger algebras are finite-dimensional semisimple algebras that were first 
introduced by Paul Terwilliger in 1992 in studies of association schemes and distance-regular graphs. The Terwilliger algebras of the conjugacy class association schemes of the symmetric groups $\sym{n}$, for $3\leq n \leq 6$, have been studied and completely determined. The case for $\sym{7}$ is computationally much more difficult and has a potential application to find the size of the largest permutation codes of $\sym{7}$ with a minimal distance of at least $4$. In this paper, the dimension, the Wedderburn decomposition, and the block dimension decomposition of the Terwilliger algebra of the conjugacy class scheme of the group $\sym{7}$  are determined.
	\end{abstract}
	
	\maketitle


	\tableofcontents
	
	\section{Introduction}
	Association schemes are central objects in algebraic combinatorics, which were introduced in the 1950s by Bose and Shimamoto. Paul Terwilliger introduced subconstituent algebras (now known as Terwilliger algebras) of association schemes in 1992 in the series of papers \cite{terwilliger1992subconstituent,terwilliger1993subconstituent2,terwilliger1993subconstituent3} as a means to study association schemes.  Many papers on Terwilliger algebras of various association schemes have appeared in the literature since then, see for instance \cite{BannaiMunemasa95,bhattacharyya2010terwilliger,caughman1999terwilliger,go2002terwilliger,herman2024terwilliger,levstein2006terwilliger,schrijver2005new,tan2019terwilliger}. These results mostly focus on the Terwilliger algebras of association schemes that are $P$-polynomial or $Q$-polynomial, and certain highly symmetric graphs arising from permutation group actions. 

   One important family of association schemes that arise from abstract groups is the conjugacy class scheme. To the best of our knowledge, there are only a handful of papers studying the Terwilliger algebra of conjugacy class schemes in the literature. In \cite{BannaiMunemasa95}, Bannai and Munemasa gave a complete characterization of abelian groups and showed that the corresponding schemes are triply transitive (see Section~\ref{sect:background} for the definition). In \cite{BalmacedaOura94}, Balmaceda and Oura determined the ones arising from the groups $\alt{5}$ and $\sym{5}$. More recently, Hamid and Oura \cite{HamidOura19}, determined the Terwilliger algebra of $\sym{6}$. A new resource that gives a comprehensive overview of the theory of Terwilliger algebras for conjugacy class schemes is the master's thesis  \cite{Bastian-Thesis2021} of Bastian; in particular, this thesis provides many computational results  and poses many open questions. Motivated by one of these questions in \cite{Bastian-Thesis2021} on dihedral groups, Maleki \cite{maleki2024terwilliger} determined the Terwilliger algebra of metacyclic groups that are semidirect product of a cyclic group of order $n$ by a cyclic group of order $2$. Maleki's results were recently extended to more general families of metacyclic groups \cite{yang2024terwilliger} by Feng, Yang and Zhang. 
	
    Motivated by this lack of general understanding of the Terwilliger algebra of conjugacy class schemes of groups, we start an in-depth program whose purpose is to analyze these algebras. In this paper, we solely focus on one of the next open cases for the symmetric group, that is, the group $\sym{7}$. To state our main results, we first need to recall some terminology.
	
\medskip
    	Formally, an \itbf{association scheme} is a pair $(X,\mathcal{S})$, where $X$ is a non-empty finite set and $\mathcal{S} = \{S_0,S_1,\ldots,S_d\}$ is a set of relations on $X$, satisfying the properties:
    	\begin{enumerate}[(i)]
		\item $S_0 = \{ (x,x):x\in X\}$,
		\item $\mathcal{S}$ is a partition of $X\times X$,
		\item for any $0\leq i\leq d$, $S_i^* = \{(y,x): (x,y) \in S_i\} \in \mathcal{S}$,\label{star}
		\item there exists non-negative integers $(p_{ij}^k)_{i,j,k\in \{0,1,\ldots,d\}}$ called the \itbf{intersection numbers} with the property that for any $(x,y)\in S_k$, we have 
		\begin{align*}
			p_{ij}^k = \left| \left\{ z\in X: (x,z)\in S_i,\ (z,y)\in S_j \right\} \right| .
		\end{align*}
	    \end{enumerate}
    	If $S_i^* = S_i$ for all $0\leq i\leq d$, then the association scheme is called \itbf{symmetric}. Moreover, an association scheme with intersection numbers $(p_{ij}^k)$ is \itbf{commutative} if $p_{ij}^k = p_{ji}^k$, for $0\leq i,j,k\leq d.$
	
    	Throughout this section, we let $(X,\mathcal{S})$ be an association scheme, where $\mathcal{S} = \{S_0,S_1,\ldots,S_d\}$. Then, each relation $S_i\in \mathcal{S}$ determines a directed graph on $X$ whose arc set is $S_i$. Let $A_i$ be the adjacency matrix of the digraph corresponding to $S_i$ for $1\leq i\leq d$. Using this, one can also rephrase the definition of association schemes in terms of adjacency matrices. With this reformulation, we can view association schemes from a more algebraic viewpoint, and we can define certain algebras that are associated with the scheme. The \itbf{Bose-Mesner algebra} $\mathcal{B}$ is the subalgebra of $\operatorname{M}_{|X|}(\mathbb{C})$ generated by the set of all adjacency matrices $\{A_0,A_1,\ldots,A_d\}$. This set is also a basis for the Bose-Mesner algebra since
    	\begin{align*}
		A_iA_j = \sum_{k=0}^d p_{ij}^k A_k
    	\end{align*}
    	for all $0\leq i,j\leq d$, and $\sum_{i = 0}^d A_i = J_{|X|}$ is the matrix of all-ones. The \itbf{standard module} of the association scheme $(X,\mathcal{S})$ is the subspace 
    	\begin{align*}
	 	V = \bigoplus_{x\in X} \mathbb{C} x.
    	\end{align*}
	
    	Another algebra associated with the association scheme $(X,\mathcal{S})$ is its \itbf{Terwilliger algebra} with respect to a fixed vertex of $X$. Fix an element $x\in X$. For any $0\leq i\leq d,$ define the matrix $E_i^*(x)$ to be the diagonal matrix whose diagonal is the row of the adjacency matrix $A_i$ corresponding to $x$. The \itbf{Terwilliger algebra} with respect to $x$ is the complex algebra 
    	\begin{align*}
		T_{x}(\mathcal{S}) = \langle A_0,A_1,\ldots,A_d,E_0^*(x),E_1^*(x),\ldots,E_d^*(x)\rangle.
    	\end{align*}
       As $T_x(\mathcal{S})$ is a self-adjoint subalgebra of $\operatorname{M}_{|X|}(\mathbb{C})$, it is semisimple, so it is algebraically determined by the isomorphism classes of its irreducible modules. A subspace $W$ of $V$ for which $TW\subseteq W$ is called a $\itbf{T-module}$. A $T$-module $W$ is $\emph{irreducible }$ whenever it is nonzero and has no nontrivial submodules.  Let $W$ denote an irreducible $T$-module, then it is a direct sum of non-vanishing subspaces $E^*_iW$, for $0\leq i\leq d$.

    	The $T$-module $W$ is \itbf{thin} whenever $\dim(E^*_iW)\leq1$ for $0\leq i\leq d$. From a representation theory perspective, it is of interest to know the dimension of a given Terwilliger algebra and its Wedderburn decomposition, these require the list of dimensions of the inequivalent irreducible modules. From a purely algebraic combinatorics perspective, knowing the irreducible modules of the Terwilliger algebra can help study the combinatorial properties of the underlying association scheme or objects associated with it.
	
    	In this paper, we determine some properties of the Terwilliger algebra of the conjugacy class scheme of the symmetric group $\sym{7}$. Recall that the conjugacy class scheme of a group $G$ with conjugacy classes $C_{0} = \{1\}, C_1,\ldots,C_d$ is the association scheme $\mathfrak{X}(G) := (G,\mathcal{S})$, where $\mathcal{S}$ consists of sets of the form $S_{i} = \{ (x,y) \in G\times G:x^{-1}y \in C_i  \}$ for $0\leq i\leq d$. Since the digraphs in $\mathcal{S}$ are Cayley digraphs, the left-regular representation of $G$ is a regular subgroup of automorphisms. That is, if $\lambda_g: G\to G$ such that $ x\to gx$, for  $g\in G$, then the subgroup $L_G = \left\{ \lambda_g : g\in G \right\}$ is a subgroup of the full automorphism group $\Aut{\mathcal{S}}$ of the scheme $(X,\mathcal{S})$. See Section~\ref{sect:background} for more details on the automorphism group of association schemes. Hence,	the automorphism group of any conjugacy class scheme is transitive. Consequently, every Terwilliger algebra of the conjugacy class scheme of a group $G$ is isomorphic to the one  with respect to the identity element $1\in G$. Henceforth, we only consider the Terwilliger algebra with respect to the element $1\in G$, and if there is no confusion, we use the notations $E_i^*=E_i^*(1)$ for $0\leq i\leq d$, and $T = T_1(\mathcal{S})$.
	
		For the symmetric groups $\sym{3}$ and $\sym{4}$, Bannai and Munemasa \cite{BannaiMunemasa95} showed that the dimensions of their Terwilliger algebras are respectively $11$ and  $43$. Moreover, the Terwilliger algebras of these two groups are equal to the centralizer algebra of the underlying group endowed with the action by conjugation (see Section~\ref{sect:background} for details), which results in the Wedderburn decomposition being straightforward.  The result for $\sym{3}$ was obtained using a computer search. Note however that the group $\sym{3}\cong \operatorname{D}_{6}$ belongs to the larger family of metacyclic groups admitting a cyclic subgroup of index $2$, whose Terwilliger algebras were recently determined by Maleki \cite{maleki2024terwilliger}. For $\sym{5}$, Balmaceda and Oura \cite{BalmacedaOura94} proved that the dimension of the Terwilliger algebra is $155$, and that it is not equal to the centralizer algebra of the action of $\sym{5}$ by conjugation. In \cite{BogaertsDukes14}, Bogaerts and Dukes used the Terwilliger algebras of $\sym{7}$ to study permutation codes from $\sym{7}$ with Hamming distance at least $4$. Additionally, they conjectured in \cite[Conjecture~3.5]{BogaertsDukes14} that the Terwilliger algebra of $\sym{n}$ and the centralizer algebra of the full stabilizer of the identity permutation in the automorphism group of the conjugacy scheme of $\sym{n}$ are equal, for $n\geq 6$. This conjecture was however disproved by Hamid and Oura \cite{HamidOura19}, who showed the dimension of the Terwilliger algebra for $\sym{6}$ is equal to $758$, while that of the centralizer algebra is $761$. In \cite{BalmacedaOura94,HamidOura19}, the Wedderburn decomposition of the Terwilliger algebras was also given. In this paper, we generalize these results by giving the dimension and the Wedderburn decomposition of the Terwilliger algebra of $\sym{7}$. Our main result, which we established with the assistance of computational algebra software \verb*|GAP| ~\cite{GAP4} and its association scheme package~\cite{BHL24} and \verb*|Sagemath|~\cite{sagemath} is stated as follows. 
    	\begin{theorem}
		Consider the symmetric group $\sym{7}$ and let $T$ be its Terwilliger algebra. Then, $\dim(T) = 4039$ and 
		\begin{center}
		\begin{align*}	
				T &= \operatorname{M}_{15}(\mathbb{C})
				\oplus\operatorname{M}_{15}(\mathbb{C})
				\oplus\operatorname{M}_{26}(\mathbb{C})\oplus\operatorname{M}_{2}(\mathbb{C})
				\oplus\operatorname{M}_{17}(\mathbb{C})
				\oplus\operatorname{M}_{16}(\mathbb{C})
				\oplus\operatorname{M}_{2}(\mathbb{C})
				\oplus\operatorname{M}_{21}(\mathbb{C}
				)
				\\& \hspace{0.5cm} \oplus\operatorname{M}_{15}(\mathbb{C}) \oplus \operatorname{M}_{2}(\mathbb{C}) \oplus\operatorname{M}_{19}(\mathbb{C})
				\oplus\operatorname{M}_{5}(\mathbb{C})\oplus\operatorname{M}_{13}(\mathbb{C})
				\oplus\operatorname{M}_{20}(\mathbb{C})
				\oplus\operatorname{M}_{2}(\mathbb{C})
				\oplus \operatorname{M}_{8}(\mathbb{C})\oplus\operatorname{M}_{20}(\mathbb{C}) \\& \hspace{0.5cm}
				\oplus\operatorname{M}_{9}(\mathbb{C})
				\oplus\operatorname{M}_{4}(\mathbb{C})
				\oplus\operatorname{M}_{9}(\mathbb{C})
				\oplus\operatorname{M}_{3}(\mathbb{C})
				\oplus\operatorname{M}_{3}(\mathbb{C})\oplus\operatorname{M}_{7}(\mathbb{C})\oplus\operatorname{M}_{5}(\mathbb{C})\oplus \mathbb{C}.	
		\end{align*}\label{thm:main}
		\end{center}
    	\end{theorem}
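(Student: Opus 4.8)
The plan is to exploit semisimplicity to reduce the statement to an explicit finite computation inside an algebra far smaller than $\operatorname{M}_{5040}(\mathbb{C})$. Since $T$ is self-adjoint it is semisimple, so by Wedderburn--Artin $T \cong \bigoplus_{\alpha} \operatorname{M}_{m_\alpha}(\mathbb{C})$ and $\dim T = \sum_\alpha m_\alpha^2$; thus it suffices to produce the complete list of dimensions $m_\alpha$ of the inequivalent irreducible $T$-modules. First I record the generators concretely: writing $\widehat{C_i} = \sum_{g\in C_i} g$ for the $i$-th class sum, the adjacency matrix $A_i$ acts on $V = \mathbb{C}\sym{7}$ as right (hence also left) multiplication by the central element $\widehat{C_i}$, while $E_i^*$ is the orthogonal projection of $V$ onto $\operatorname{span}(C_i)$.

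The key reduction is that every generator is equivariant for the conjugation action of $G = \sym{7}$ on $V$: the class sums are central, and each $E_i^*$ preserves $\operatorname{span}(C_i)$, which conjugation permutes. Hence $T \subseteq \End{G}{V}$. Decomposing $V$ under conjugation into isotypic components, $V \cong \bigoplus_{\psi\in\operatorname{Irr}(\sym{7})} W_\psi \otimes V_\psi$, yields $\End{G}{V} \cong \bigoplus_\psi \operatorname{M}_{m_\psi}(\mathbb{C})$, where the multiplicity $m_\psi = \langle \theta,\psi\rangle = \sum_{C} \psi(x_C)$ is the sum of the values of $\psi$ over conjugacy-class representatives, read directly from the character table of $\sym{7}$. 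Here $\theta(g) = |C_G(g)|$ is the conjugation character, and $\dim \End{G}{V} = \langle\theta,\theta\rangle = \sum_C |C_G(x_C)| = 5579$, so $T$ will be a proper subalgebra. I would then realize each $A_i$ and each $E_i^*$ as its tuple of images $\phi_\psi(A_i),\phi_\psi(E_i^*)\in\operatorname{M}_{m_\psi}(\mathbb{C})$ on the multiplicity spaces $W_\psi$, working inside $\bigoplus_\psi \operatorname{M}_{m_\psi}(\mathbb{C})$ rather than in the $5040$-dimensional standard module.

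With this model in hand the remainder is linear algebra: generate the subalgebra spanned by all words in $\{\phi_\psi(A_i),\phi_\psi(E_i^*)\}$, compute its dimension by repeatedly multiplying and row-reducing a spanning set until it stabilizes, then compute the center of $T$ and its primitive central idempotents to split $T$ into its simple blocks $\operatorname{M}_{m_\alpha}(\mathbb{C})$. The block sizes $m_\alpha$ are the dimensions of the distinct irreducible $T$-modules; equivalently, they are obtained by decomposing each multiplicity-space module $W_\psi$ under the image of $T$ and grouping isomorphic constituents across the fifteen characters $\psi$. Finally I would verify $\sum_\alpha m_\alpha^2 = 4039$ and that the $25$ blocks are exactly those listed. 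The passage from $15$ isotypic components to $25$ blocks reflects that $T$ genuinely refines the conjugation-isotypic decomposition on several components.

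The hard part is twofold and is what forces the use of \verb*|GAP| and \verb*|Sagemath|. First, realizing the generator images $\phi_\psi(A_i),\phi_\psi(E_i^*)$ requires an explicit conjugation-module decomposition of $\mathbb{C}\sym{7}$ --- effectively the Kronecker-type branching data of $\sym{7}$ --- together with compatible bases for the multiplicity spaces. Second, certifying the dimension and the full Wedderburn decomposition of an algebra of dimension $4039$, sitting inside an ambient of dimension $5579$, is only feasible by machine. Crucially, no shortcut through the scheme's automorphism group is available: since the Bogaerts--Dukes equality between $T$ and the centralizer algebra of the full identity-stabilizer $\Aut{\mathcal{S}}_1$ fails already for $\sym{6}$ by Hamid--Oura, one must build $T$ from its generators and cannot read it off $\End{G}{V}$ or any intermediate centralizer.
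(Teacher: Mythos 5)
Your strategy is sound and would, if executed, establish the theorem; but it diverges from the paper's route in one substantive way. You embed $T$ only into the conjugation centralizer $\End{\sym{7}}{V}$, of dimension $\sum_C |C_{\sym{7}}(x_C)| = 5579$, and you explicitly dismiss ``any intermediate centralizer.'' The paper instead exploits the fact that the inversion map $\varphi(x)=x^{-1}$ also commutes with every generator (conjugacy classes of $\sym{n}$ are inverse-closed), so $T \subseteq \tilde{T} := \End{H_1}{V}$ with $H_1 = \operatorname{Inn}(\sym{7})\rtimes\langle\varphi\rangle \cong \sym{7}\times C_2$, and $\dim\tilde{T}=4043$. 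Since $\dim T = 4039$, the deficit is only $4$, and this is what lets the paper read off almost the entire Wedderburn decomposition by hand: every irreducible constituent of the permutation character of $H_1$ with multiplicity $m_\chi\ge 3$ must contribute the same block to $T$ as to $\tilde{T}$ (Corollary~\ref{cor:large-dim}), and the five remaining small-multiplicity constituents are settled by tracking which centrally primitive idempotents of $\tilde{T}$ lie in $T$ (Lemma~\ref{lem:cpi}); in particular $\tilde{e}_{\chi^{[4,1^3]^+}}$ and $\tilde{e}_{\chi^{[3,2^2]^-}}$ fuse, so one pair $\operatorname{M}_2\oplus\operatorname{M}_2$ collapses diagonally to a single $\operatorname{M}_2$, accounting for exactly the dimension drop $4043-4039=4$. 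Your plan replaces this structural argument with a direct machine computation of $Z(T)$ and its primitive idempotents inside a $5579$-dimensional ambient; that is feasible (the largest multiplicity space has dimension $36$) but forfeits the cheap certification the tighter containment provides. For the dimension itself, your ``multiply and row-reduce until stable'' loop is essentially the paper's switching filtration $T_0\subsetneq T_1\subsetneq T_2 = T$, just without the block-by-block bookkeeping the paper uses to certify termination at $T_2=T_3$.

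One further point your verification step would surface: the sum of squares of the $25$ block sizes displayed in the theorem is $4043$, not $4039$ --- the displayed list is in fact the decomposition of $\tilde{T}$. Consistently with the paper's own proof, the decomposition of $T$ has $24$ blocks, with one of the four $\operatorname{M}_2(\mathbb{C})$ summands (the one corresponding to the fused pair $[4,1^3]^+$, $[3,2^2]^-$) appearing only once. So carrying out your final check against the stated list, rather than against the proof, would fail; this is a defect of the statement, not of your method.
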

        We note that Bogaerts and Dukes in \cite{BogaertsDukes14} that the centralizer algebra for the $\sym{7}$ case of their conjecture was of dimension $4043$.


	\section{Background}\label{sect:background}
	
	Throughout this section, we let $(X,\mathcal{S})$ with $\mathcal{S} =\{S_0,S_1,\ldots,S_d\}$ be a commutative association scheme, and $V$ the corresponding standard module. We refer the reader to \cite{bannai-ito} for the basic terminologies on association schemes.


	\subsection{Automorphism groups and centralizer algebras}
	We can determine more information about the Terwilliger algebra $T_x(\mathcal{S})$ from a permutation group arising from the underlying association scheme $(X,\mathcal{S})$. An \itbf{automorphism} of the association scheme $(X,\mathcal{S})$ is a permutation of $X$ that preserves all the relations, in other words, an automorphism of every digraph of the scheme. The group of all automorphisms of the scheme $(X,\mathcal{S})$ is denoted by $\Aut{\mathcal{S}}$.  By definition, we have
	\begin{align*}
		\Aut{\mathcal{S}} = \bigcap_{i=0}^d \Aut{{\Gamma}_i},
	\end{align*}
	where $\Gamma_i$ is the digraph corresponding to $S_i$, for $0\leq i\leq d$.
	
	Given $H\leq \Aut{\mathcal{S}}$, we can endow the standard module $V$ with a $\mathbb{C}H$-module action by defining the action of $h\in H$ on $V$ by 
	\begin{align*}
		h. v = P_h v
	\end{align*}
	for $v \in V$ and where $P_h$ is the permutation matrix corresponding to $h$.

	The \itbf{centralizer algebra} of $H$ is the algebra $\End{H}{V}$ consisting of all matrices commuting with the action of $H$ as $\mathbb{C}H$-module on the standard module. In particular, the Bose-Mesner algebra $\mathcal{B}$ is contained in $\End{G}{V}$. The Terwilliger algebra is also contained in the centralizer algebra of a certain group. 
	
	Given a base vertex $x\in X$ and a subgroup $H$ of the automorphism group of $\mathcal{S}$, we will soon see that the Terwilliger algebra $T_x(\mathcal{S})$ is contained in the centralizer algebra $\End{H_x}{V}$, where $H_{x}$ is stabilizer of $x\in X$ in $H$.  Similar to the case of the Bose-Mesner algebra, it is not guaranteed that $T_x(\mathcal{S})$ is equal to one of these centralizer algebras $\End{H_x}{V}$, and to the best of our knowledge, there are no known conditions for this to be true in the literature.

 Given a digraph $\Gamma$ with adjacency matrix $A$, the automorphism group $\Aut{\Gamma}$ of $\Gamma$ consists of permutations of the vertices of $\Gamma$ that preserve the arcs and the non-arcs. It is not hard to see that by viewing $g\in \Aut{\Gamma}$ as a permutation matrix $P_g$, we have $P_gAP_g^{-1} = A$. Consequently, 
	\begin{align*}
		\Aut{\Gamma} = \left\{ P: \mbox{ $P$ is a permutation matrix and } PA = AP \right\}
	\end{align*}
	As $\Aut{\mathcal{S}}$ is the intersection of the automorphism groups of all the digraphs in the scheme, we have
	\begin{align}
		\Aut{\mathcal{S}} = \left\{ P\,\, | \mbox{ $P$ is a permutation matrix and }PA_i = A_iP \mbox{ for all } 0\leq i\leq d \right\}. 
	\end{align}
	Therefore, we obtain a natural matrix representation of the action of $\Aut{\mathcal{S}}$ on $V.$ That is, we may view the action of any $H \leq \Aut{\mathcal{S}}$ on $V$ as multiplication by a permutation matrix. The following observation immediately follows.
	
	\begin{lemma}
		Let $(X,\mathcal{S})$ be an association scheme and let $\mathcal{B}$ be its Bose-Mesner algebra. For any $H \leq \Aut{\mathcal{S}}$, we have that  $\mathcal{B}\subseteq \End{H}{V}$. \label{lem:BM-algebra}
	\end{lemma}
	
	Before proving the next lemma, we need to determine how the elements of $\Aut{\mathcal{S}}$ act on matrices. From what we have seen before,
	for any $g\in \Aut{\mathcal{S}}$ and $\mathbf{v}\in V$, $g.\mathbf{v} = P_g\mathbf{v}$, where $P_g$ is the permutation matrix of $g$. For $x\in X$, the entry of $P_g \mathbf{v}$ corresponding to the vertex $x$ is 
	\begin{align*}
		\left(P_g\mathbf{v}\right)_x = \sum_{y\in X}P_g(x,y)\mathbf{v}_y = P_g(x,\sigma^{-1}(x)) \mathbf{v}_{\sigma^{-1}(x)} = \mathbf{v}_{\sigma^{-1}(x)}.
	\end{align*}
	For any $0\leq i\leq d$, we now have that
	\begin{align*}
		\left(P_g A_i\right)_{xy} = \sum_{z\in X} P_g(x,z)A_i(z,y)
		=
		\begin{cases}
			1 & \mbox{ if there exist $z$ such that $\sigma(z) = x$ and $(z,y) \in S_i$},\\
			0 & \mbox{ otherwise.}
		\end{cases}
	\end{align*}
	Similarly, we have
	\begin{align*}
		\left(A_iP_g \right)_{xy} = \sum_{z\in X} A_i(x,z)P_g(z,y)
		=
		\begin{cases}
			1 & \mbox{ if there exist $z$ such that $\sigma(y) = z$ and $(x,z) \in S_i$},\\
			0 & \mbox{ otherwise.}
		\end{cases}
	\end{align*}

	\begin{lemma}
		Let $(X,\mathcal{S})$ be an association scheme, $H\leq \Aut{\mathcal{S}}$, and  $x\in X$. Then, the Terwilliger algebra $T_x(\mathcal{S})$ is contained in $\End{H_{x}}{V}.$\label{lem:T-algebra}
	\end{lemma}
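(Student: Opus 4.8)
The plan is to verify that each of the algebra generators $A_0,\ldots,A_d,E_0^*(x),\ldots,E_d^*(x)$ commutes with the permutation matrix $P_g$ for every $g\in H_x$, and then to conclude by noting that $\End{H_x}{V}$ is a subalgebra of $\operatorname{M}_{|X|}(\mathbb{C})$, hence closed under the products and linear combinations used to build $T_x(\mathcal{S})$ from its generators. Writing $\sigma$ for the permutation of $X$ underlying $g$, so that $P_g(a,b)=1$ exactly when $\sigma(b)=a$, membership in $\End{H_x}{V}$ amounts to the commuting condition $MP_g = P_g M$ for all $g \in H_x$.

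For the adjacency matrices this is immediate: by Lemma~\ref{lem:BM-algebra} the Bose-Mesner algebra $\mathcal{B}$ lies in $\End{H}{V}$, and since $H_x \leq H$ we have $\mathcal{B} \subseteq \End{H_x}{V}$; in particular $A_iP_g = P_g A_i$ for each $i$ and each $g \in H_x$. The substance of the argument is therefore the dual idempotents $E_i^*(x)$.

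For these I would compute matrix entries directly, using that $E_i^*(x)$ is the diagonal matrix with $(a,a)$-entry equal to $A_i(x,a) = [(x,a) \in S_i]$. Exactly as in the entry-wise formulas preceding the statement, one finds
\[
(P_g E_i^*(x))_{ab} = [\sigma(b)=a]\,[(x,b)\in S_i], \qquad (E_i^*(x) P_g)_{ab} = [\sigma(b)=a]\,[(x,a)\in S_i].
\]
These agree once one checks that, whenever $\sigma(b)=a$, the two indicators $[(x,b)\in S_i]$ and $[(x,a)\in S_i]$ coincide. This is where the two defining properties of $g\in H_x$ enter: because $g$ is an automorphism of the scheme it preserves each relation, so $(x,b)\in S_i \iff (\sigma(x),\sigma(b))\in S_i$, and because $g$ stabilizes $x$ we have $\sigma(x)=x$, giving $(\sigma(x),\sigma(b))\in S_i \iff (x,a)\in S_i$. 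Hence $P_g E_i^*(x)=E_i^*(x)P_g$.

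With every generator shown to commute with each $P_g$, $g\in H_x$, all generators lie in $\End{H_x}{V}$; since this centralizer algebra is closed under sums and products, the algebra $T_x(\mathcal{S})$ they generate is contained in it, as claimed. The only delicate point is the relation-preservation step for the $E_i^*(x)$, where it is essential that $g$ fixes the base vertex $x$ — this is precisely why one must pass from $H$ to the stabilizer $H_x$, rather than merely invoking $\End{H}{V}$ as for the Bose-Mesner algebra.
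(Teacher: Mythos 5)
Your proposal is correct and follows essentially the same route as the paper: the adjacency matrices are handled via Lemma~\ref{lem:BM-algebra}, and the dual idempotents via the same entry-wise computation of $(P_g E_i^*(x))_{ab}$ and $(E_i^*(x)P_g)_{ab}$, with relation-preservation plus $\sigma(x)=x$ supplying the key equivalence $(x,b)\in S_i \iff (x,a)\in S_i$. No gaps.
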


	\begin{proof}
	 We can assume $H = H_x$. It is clear that $A_i \in \End{H}{V}$ for any $0\leq i\leq d$, so it is enough to show $E_i^* = E_i^*(x) \in \End{H}{V}$ for $0\leq i\leq d$. Let $P_h$ be the permutation matrix corresponding to $h\in H$. 
		
		We have
		\begin{align*}
			\left(P_hE_i^*\right)_{y,z} &= 
			\sum_{t\in X}P_h(y,t)E_i^*(t,z)\\
			&=
			P_h(y,z)E_{i}^*(z,z)\\
			&=
			\begin{cases}
				1 & \mbox{ if  $h(z) = y$ and $(x,z) \in S_i$}\\
				0 & \mbox{ otherwise.}
			\end{cases}
		\end{align*}
		Moreover, 
		\begin{align*}
			\left(E_i^*P_h\right)_{y,z} &= 
			\sum_{t\in X}E_i^*(y,t)P_h(t,z)\\
			&=
			E_{i}^*(y,y)P_h(y,z)\\
			&=
			\begin{cases}
				1 & \mbox{ if  $h(z) = y$ and $(x,y) \in S_i$}\\
				0 & \mbox{ otherwise.}
			\end{cases}
		\end{align*}
		Using the fact that $h\in H\leq \Aut{\mathcal{S}}$, we have $h(x,z) = (h(x),h(z)) = (x,y)$ and so $(x,z) \in S_i$ if and only if $(x,y) \in S_i$. We conclude that $P_hE_i^*(x) = E_i^*(x) P_h$. This shows that $T_{x}(\mathcal{S}) \subseteq \End{H}{V}$.
	\end{proof}

	We recall the following well-known fact.
	\begin{lemma}
	 Let $H$ be a subgroup of the automorphism group $\Aut{\mathcal{S}}$. Then, the dimension of the centralizer algebra $\End{H}{V}$ is equal to the number of orbitals of $H$ acting on $X$. \label{lem:number-of-orbitals}
	\end{lemma}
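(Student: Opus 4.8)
The plan is to identify $\End{H}{V}$ explicitly as the space of matrices that are constant on the orbits of $H$ acting diagonally on $X \times X$, and then to exhibit a basis of that space indexed by the orbitals, from which the dimension count is immediate.

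First I would translate the commuting condition into a combinatorial condition on matrix entries. Writing $P_h$ for the permutation matrix of $h \in H$, so that $P_h(x,y) = 1$ precisely when $h(y) = x$ in accordance with the convention $h.\mathbf{v} = P_h\mathbf{v}$ fixed earlier, a matrix $M \in \operatorname{M}_{|X|}(\mathbb{C})$ lies in $\End{H}{V}$ exactly when $P_h M = M P_h$ for every $h \in H$, equivalently $M = P_h M P_h^{-1}$. A direct entrywise computation of $P_h M P_h^{-1}$, of the same flavour as those carried out just before Lemma~\ref{lem:T-algebra}, gives $\left(P_h M P_h^{-1}\right)_{xy} = M_{h^{-1}(x),\, h^{-1}(y)}$. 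Hence $M \in \End{H}{V}$ if and only if $M_{xy} = M_{h(x),\, h(y)}$ for all $x,y \in X$ and all $h \in H$; that is, $M$ is constant on each orbit of the diagonal action of $H$ on $X \times X$.

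These orbits are precisely the orbitals of $H$. For each orbital $\mathcal{O}$, let $A_{\mathcal{O}}$ be its $\{0,1\}$-indicator matrix, with $A_{\mathcal{O}}(x,y) = 1$ if and only if $(x,y) \in \mathcal{O}$. By the characterization above, each $A_{\mathcal{O}}$ belongs to $\End{H}{V}$, and conversely any $M \in \End{H}{V}$ can be written as $M = \sum_{\mathcal{O}} c_{\mathcal{O}} A_{\mathcal{O}}$, where $c_{\mathcal{O}}$ is the common entry value of $M$ on $\mathcal{O}$. Since the orbitals partition $X \times X$, the matrices $A_{\mathcal{O}}$ have pairwise disjoint supports and are therefore linearly independent. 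Thus they form a basis of $\End{H}{V}$, and $\dim \End{H}{V}$ equals the number of orbitals of $H$ on $X$.

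There is no serious obstacle here, as this is a standard fact going back to Schur and Wielandt; the only point requiring care is keeping the direction of the permutation consistent throughout the entrywise computation establishing $\left(P_h M P_h^{-1}\right)_{xy} = M_{h^{-1}(x),\, h^{-1}(y)}$, and then noting that letting $h$ range over $H$ makes the conditions indexed by $h$ and by $h^{-1}$ equivalent, so that constancy on orbits is correctly characterized.
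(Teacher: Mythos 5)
Your proof is correct and complete: the entrywise identity $\left(P_h M P_h^{-1}\right)_{xy} = M_{h^{-1}(x),\,h^{-1}(y)}$ is consistent with the convention $(P_g\mathbf{v})_x = \mathbf{v}_{g^{-1}(x)}$ used elsewhere in the paper, and the orbital indicator matrices do form a basis by the disjoint-support argument. The paper offers no proof of this lemma, merely recalling it as a well-known fact, and your argument is exactly the standard one (going back to Schur and Wielandt) that the statement implicitly rests on.
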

	
	Recall that the \itbf{permutation character} of a group action $M$ on a set $\Omega$ is the character $\pi: M \to \mathbb{C}$ such that $\pi(m)$ is the number of elements of $\Omega$ fixed by the group element $m\in M$. The permutation character $\pi$ is the character afforded by the permutation representation $M\to \operatorname{GL}_{|\Omega|}(\mathbb{C})$ such that $m\mapsto P_m$, where $P_m$ is the permutation matrix corresponding to $m\in M$. Since $\pi$ is a sum of irreducible characters, there exists a non-negative number $m_\chi$ such that 
	\begin{align*}
		\pi = \sum_{\chi\in \operatorname{Irr}(M)} \langle \pi,\chi \rangle \chi.
	\end{align*}
	
 Given $H \leq \Aut{\mathcal{S}}$, the centralizer algebra $\End{H}{V}$ is semisimple. Its Wedderburn decomposition can be obtained from the multiplicity of the irreducible characters in the permutation character of the action of $H$ as a permutation group.  

	\begin{lemma}\cite[Theorem~1.7.9]{sagan2013symmetric}
		Let $H \leq \Aut{\mathcal{S}}$ and $\pi$ its permutation character. Then, we have that
		\begin{align*}
			\End{H}{V} = \bigoplus_{\chi \in \operatorname{Irr}(H)} \operatorname{M}_{\langle \pi,\chi \rangle}\left(\mathbb{C}\right).
		\end{align*}\label{lem:decomposition-centralizer}
	\end{lemma}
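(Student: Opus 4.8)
The plan is to recognize that $\End{H}{V}$ is precisely $\operatorname{End}_{\mathbb{C}H}(V)$, the algebra of $\mathbb{C}H$-module endomorphisms of the permutation module $V$, and then to compute this endomorphism algebra from the decomposition of $V$ into irreducibles. The identification $\End{H}{V} = \operatorname{End}_{\mathbb{C}H}(V)$ follows from the observation made earlier in this section that the action of any $h \in H$ on $V$ is realized by the permutation matrix $P_h$; thus a matrix commutes with the $H$-action exactly when it is a $\mathbb{C}H$-endomorphism of $V$. First I would invoke Maschke's theorem: since $\mathbb{C}H$ is semisimple, $V$ decomposes as $V \cong \bigoplus_{\chi \in \operatorname{Irr}(H)} m_\chi W_\chi$, where $W_\chi$ is the irreducible $\mathbb{C}H$-module affording $\chi$ and $m_\chi$ is its multiplicity in $V$. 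Because $\pi$ is exactly the character of the $\mathbb{C}H$-module $V$, the orthogonality relations for irreducible characters give $m_\chi = \langle \pi, \chi\rangle$, which will match the exponents in the claimed decomposition.

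Next I would use Schur's lemma to pass from the module decomposition to the block structure of the endomorphism algebra. Grouping $V$ into its isotypic components $V_\chi \cong m_\chi W_\chi$, Schur's lemma guarantees $\operatorname{Hom}_{\mathbb{C}H}(V_\chi, V_\psi) = 0$ whenever $\chi \neq \psi$, so every endomorphism of $V$ preserves each isotypic component and hence $\operatorname{End}_{\mathbb{C}H}(V) = \bigoplus_{\chi} \operatorname{End}_{\mathbb{C}H}(V_\chi)$. For a fixed $\chi$, I would then apply the general fact that the endomorphism algebra of $m_\chi$ copies of an irreducible module is the matrix algebra $\operatorname{M}_{m_\chi}\!\left(\operatorname{End}_{\mathbb{C}H}(W_\chi)\right)$, where a module endomorphism is recorded by the matrix of its components across the $m_\chi$ summands. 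Since $\mathbb{C}$ is algebraically closed, Schur's lemma forces $\operatorname{End}_{\mathbb{C}H}(W_\chi) \cong \mathbb{C}$, and so $\operatorname{End}_{\mathbb{C}H}(V_\chi) \cong \operatorname{M}_{m_\chi}(\mathbb{C})$. Assembling the blocks over all $\chi \in \operatorname{Irr}(H)$ yields the asserted Wedderburn decomposition.

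As this is a textbook result cited from Sagan, I do not expect a serious obstacle; the only points requiring care are the two identifications just described. The first, $\End{H}{V} = \operatorname{End}_{\mathbb{C}H}(V)$, is immediate from the permutation-matrix realization of the $H$-action. The mildly delicate bookkeeping is the second step, namely turning a direct sum of $m_\chi$ isomorphic copies of a single irreducible into a full $m_\chi \times m_\chi$ matrix algebra; this is the standard identity $\operatorname{End}(D^{\oplus m}) \cong \operatorname{M}_m(D)$ for a division ring $D = \operatorname{End}_{\mathbb{C}H}(W_\chi) \cong \mathbb{C}$, but it is worth spelling out explicitly so that the sizes $m_\chi = \langle \pi, \chi\rangle$ are unambiguously the multiplicities and not, say, the character degrees.
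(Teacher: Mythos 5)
Your proposal is correct and is the standard argument (identify $\End{H}{V}$ with $\operatorname{End}_{\mathbb{C}H}(V)$, decompose $V$ into isotypic components via Maschke's theorem, and apply Schur's lemma together with $\operatorname{End}_{\mathbb{C}H}\bigl(W_\chi^{\oplus m_\chi}\bigr) \cong \operatorname{M}_{m_\chi}(\mathbb{C})$ over the algebraically closed field $\mathbb{C}$). The paper gives no proof of this lemma, citing it to Sagan's Theorem~1.7.9, whose proof is essentially the one you describe, so there is nothing to compare beyond noting agreement.
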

	

	\subsection{The switching basis of the Terwilliger algebra}
	Fix a vertex $x\in X$ and consider the Terwilliger algebra $T = T_{x}(\mathcal{S})$.  For any $0\leq i\leq d$, we define 
	\begin{align}
		xS_i = \left\{ z\in X: (x,z)\in S_i \right\}.
	\end{align}	
	For any $0\leq i\leq d$, we let $E_i^* = E_i^*(x)$. The Terwilliger algebra $T$ admits a basis that is obtained by taking the so-called \itbf{switching products}. In this section, we describe in detail how to obtain such a basis. 
	
	In \cite{BannaiMunemasa95}, Bannai and Munemasa defined the subspace  $$T_0 = \operatorname{Span}_{\mathbb{C}} \left\{ E_i^*A_jE_k^*: 0\leq i,j,k\leq d \right\}$$ of the Terwilliger algebra $T$.  We will refer to the $E_i^* A_j E_k^*$'s as being the switching products of length $1$.  Whenever $T_0 = T$, then we say that the group $G$ is \itbf{triply regular}.  If $(X,\mathcal{S})$ is the conjugacy class scheme of a group $G$, then we say that $G$ is \itbf{triply transitive} whenever $T_0 = T_x(\mathcal{S}) = \End{G}{V}$, where $G$ acts on itself by conjugation. In this case, it is clear that if $G$ is triply transitive, then it is triply regular. 
	
	It is also not hard to see that if $T_0$ is an algebra, then it must be equal to $T$, and thus the group $G$ is triply regular. In fact, $T$ is the smallest algebra containing $T_0$, that is, $T = \langle T_0 \rangle$. If $T_0$ is not closed under matrix multiplication, then we define the subspace
	\begin{align*}
		T_1 = \operatorname{Span}_{\mathbb{C}} \left\{ E_{i}^*A_{j}E^*_{k}A_{l}E^*_{m}: 0\leq i,j,k,l,m \leq d\right\} 
	\end{align*}
	spanned by products of two basis elements of $T_0$;~i.e. $T_1$ is the span of the nonzero switching products of length $2$. If $T_1$ is not an algebra, then define the subspace $T_2$ spanned by three basis elements of $T_0$. In general, if $T_{\ell -1}$ is not an algebra for $\ell \geq 1$, then we define the subspace 
	\begin{align*}
		T_\ell = \operatorname{Span}_{\mathbb{C}} \left\{ \prod_{t=0}^{\ell} (E_{i_t}^*A_{j_t}E^*_{k_t}): 0\leq i_u,j_u,k_u \leq d, \ \mbox{ for }u\in \{0,1,\ldots,\ell\} \right\}
	\end{align*}
spanned by switching products of length $\ell$.  As the Terwilliger algebra $T$ is a finite-dimensional algebra, the chain of subspaces $(T_i)_{i\geq 0}$ eventually contains all the possible products of elements of $T_0$.  Hence, there will be a least index $c \ge 0$ for which $T_c = T_{c+1}$. In this case, $T_c$ is a subspace that is closed under multiplication and therefore is an algebra. Consequently, $T_c = \langle T_0\rangle = T$. We state this result as follows.
	\begin{lemma}
		The Terwilliger algebra is equal to $T_c$, where $c$ is the smallest positive integer for which $T_c = T_{c+1}$.\label{lem:max-ribbon-width}
	\end{lemma}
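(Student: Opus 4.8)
The plan is to realize $T$ as the union of the ascending chain $(T_\ell)_{\ell\geq 0}$ and then to show that once two consecutive terms coincide the whole chain has stabilized. First I would record the two elementary relations that drive everything. Since $\{S_i\}$ partitions $X\times X$, the matrices $E_0^*,\ldots,E_d^*$ are orthogonal idempotents with $E_i^*E_j^* = \delta_{ij}E_i^*$ and $\sum_{i=0}^d E_i^* = I$, and moreover $A_0 = I$. From these one gets $E_i^*A_jE_k^* = E_i^*A_jE_k^*A_0E_k^*$, so every switching product of length $\ell+1$ is also one of length $\ell+2$; hence $T_\ell \subseteq T_{\ell+1}$ for all $\ell$, giving an ascending chain $T_0\subseteq T_1\subseteq\cdots\subseteq T$. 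Because $T$ is finite-dimensional, the non-decreasing sequence $\dim T_\ell$ is bounded and must stabilize, so an index $c$ with $T_c = T_{c+1}$ exists, and I take the least such.

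The heart of the argument is the concatenation identity $T_{\ell+1} = \operatorname{Span}_{\mathbb{C}}(T_\ell\cdot T_0)$. For the inclusion ``$\subseteq$'', I would split a length-$(\ell+2)$ switching product after its first $\ell+1$ blocks, exhibiting it as (a length-$(\ell+1)$ product lying in $T_\ell$) times (a length-$1$ product lying in $T_0$). For ``$\supseteq$'', multiplying a length-$(\ell+1)$ product by some $E_a^*A_bE_c^*$ collapses the adjacent diagonal idempotents via $E_k^*E_a^* = \delta_{ka}E_k^*$, producing either a length-$(\ell+2)$ product or $0$; bilinearity then yields the claim. With this identity in hand, stabilization propagates: assuming $T_c = T_{c+1}$, I compute $T_{c+2} = \operatorname{Span}(T_{c+1}T_0) = \operatorname{Span}(T_c T_0) = T_{c+1} = T_c$, and an immediate induction gives $T_{c+k} = T_c$ for every $k\geq 0$. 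Consequently $\bigcup_{\ell\geq 0} T_\ell = T_c$.

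It remains to identify this union with $T$ itself. The inclusion $\bigcup_\ell T_\ell \subseteq T$ is clear since every generator-word spanning a $T_\ell$ lies in $T$. For the reverse inclusion I would verify that $\bigcup_\ell T_\ell$ is a subalgebra containing all the generators of $T$: it contains each $A_j = \sum_{i,k} E_i^*A_jE_k^* \in T_0$ and each $E_i^* = E_i^*A_0E_i^* \in T_0$ (again using $\sum E_i^* = I$ and $A_0 = I$), and it is closed under multiplication because concatenating a length-$(a+1)$ product with a length-$(b+1)$ product gives a length-$(a+b+2)$ product, so $T_a\cdot T_b\subseteq T_{a+b+1}$. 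Since $T$ is generated by the $A_j$ and the $E_i^*$, this forces $T\subseteq\bigcup_\ell T_\ell$, whence $T = \bigcup_\ell T_\ell = T_c$. Alternatively, one may invoke the already-noted fact $T = \langle T_0\rangle$ together with $\langle T_0\rangle = \bigcup_\ell T_\ell$, the latter holding because an $m$-fold product of elements of $T_0$ is a length-$m$ switching product.

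I do not expect a genuine obstacle, since the lemma is a structural rather than a computational fact. The only thing demanding care is the bookkeeping at the junctions of switching products: keeping the length-indexing of the $T_\ell$ consistent and correctly tracking how adjacent $E^*$'s collapse. The one conceptually essential step is the propagation of stabilization, which must be derived from the concatenation identity $T_{\ell+1} = \operatorname{Span}_{\mathbb{C}}(T_\ell\cdot T_0)$ rather than from a naive termwise substitution.
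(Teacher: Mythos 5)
Your proposal is correct and follows essentially the same route as the paper: the paper's justification (given in the discussion preceding the lemma) likewise observes that finite-dimensionality forces the ascending chain $(T_\ell)$ to stabilize, that at the first stabilization index $T_c$ is closed under multiplication and hence is an algebra, and that therefore $T_c = \langle T_0\rangle = T$. Your write-up simply makes explicit the concatenation identity $T_{\ell+1} = \operatorname{Span}_{\mathbb{C}}(T_\ell\cdot T_0)$ and the propagation of stabilization, details the paper leaves implicit.
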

 We will refer to a basis of $T$ consisting of products of elements of $T_0$ of the form $(E_i^*A_jE^*_k)_{0\leq i,j,k\leq d}$ as a \itbf{switching basis} of $T$. Moreover,  the smallest index for which this chain of subspaces becomes stationary as the \itbf{switching width} of the Terwilliger algebra.


	\subsection{The representation theory of the symmetric group}
	We briefly recall the representation theory of the symmetric group $\sym{n}$ and the group $\sym{n}\times C_2.$
	
	Recall that for any $n\geq 2$, a partition of the integer $n$ is a non-increasing sequence $\left(\lambda_1,\lambda_2,\ldots,\lambda_k\right)$ of positive integers summing to $n$. If $\lambda = \left(\lambda_1,\lambda_2,\ldots,\lambda_k\right)$ is a partition of $n$, then we write $\lambda \vdash n$. The conjugacy classes and the irreducible characters of the symmetric group $\sym{n}$ are both indexed by partitions of the integer $n$.
	
	Given a permutation $\sigma \in \sym{n}$, the cycle type of $\sigma$ is the sequence consisting of the length of each cycle, in the disjoint cycle decomposition of $\sigma$, arranged in a non-increasing order. Since the conjugacy classes of $\sym{n}$ are determined by their cycle type, there is a one-to-one correspondence between the conjugacy classes of $\sym{n}$ and the partitions of $n$. Due to this correspondence, we adopt the following notation.
	
	\begin{notation}
		For any $n\geq 3$, we will denote the adjacency matrices in the conjugacy class scheme of $\sym{n}$ by $A_\mu$, for $\mu \vdash n$. Similarly, we denote the idempotents of the Terwilliger algebra by $E^*_\mu$, for $\mu \vdash n.$ 
	\end{notation}
	
	It is also well known that the irreducible representations of $\sym{n}$ are indexed by partitions of $n$. Given $\lambda\vdash n$, the irreducible $\mathbb{C}\sym{n}$-module corresponding to the partition $\lambda \vdash n$ is the \itbf{Specht module} $S^\lambda$. We will denote the character corresponding to $S^\lambda$ by $\chi^\lambda$, for $\lambda \vdash n$. For more details on the representation theory of $\sym{n}$, we redirect the reader to \cite{sagan2013symmetric}. For any permutation $\lambda,\mu\vdash n$ and a permutation $\sigma \in \sym{n}$ with cycle type $\mu$, we define
	\begin{align*}
		\chi^\lambda_\mu := \chi^\lambda(\sigma) \mbox{ and } f^\lambda:= \chi^\lambda(id).
	\end{align*}
	The degree $f^\lambda$ of the character $\chi^\lambda$, for $\lambda \vdash n$, can be computed using the Hook Length Formula \cite{sagan2013symmetric}. Moreover, the character value $\chi_\mu^\lambda$, for $\lambda,\mu \vdash n$, can be computed using the Murnaghan-Nakayama rule \cite{sagan2013symmetric}.

	Now, we turn our attention to $\sym{n} \times C_2$, where $C_2 = \{1,-1\}$ is a multiplicative group of order $2$. For any $\lambda \vdash n$, two elements $(\sigma,i)$ and $(\sigma^\prime,i^\prime)$ of $\sym{n}\times C_2$ are conjugate if there exists an element $(\tau,j)\in \sym{n}\times C_2$ such that 
	\begin{align*}
		(\sigma^\prime,i^\prime) &= (\tau,j)(\sigma,i)(\tau,j)^{-1}\\
		&=\left(\tau\sigma\tau^{-1}, jij^{-1}\right)\\
		&= \left(\tau\sigma\tau^{-1},i\right),
	\end{align*}
	which implies that $i^\prime = i$, and $\sigma$ and $\sigma^\prime$ are conjugate in $\sym{n}$. The conjugacy classes of $\sym{n}\times C_2$ can be indexed by the \itbf{signed partitions} $\lambda^+:=[\lambda,1]$ and $\lambda^-:=[\lambda,-1]$, where $\lambda\vdash n$, such that the conjugacy class corresponding to $\lambda^+$ is
	\begin{align*}
		\left\{ (\sigma,1): \sigma \in \sym{n} \mbox{ has cycle type }\lambda \right\},
	\end{align*}
	and the one corresponding to $\lambda^-$ is
	\begin{align*}
		\left\{ (\sigma,-1): \sigma \in \sym{n} \mbox{ has cycle type }\lambda \right\}.
	\end{align*}
	
	The irreducible characters of $\sym{n}\times C_2$ can be determined using the signed partitions. To see this, define $\phi:C_n\to \mathbb{C}$ such that $\phi(-1) = -1$. Then, the irreducible characters of $\sym{n}\times C_2$ are of the from $\chi^\lambda\otimes \phi$. Since the trivial character of $C_2$ is $\phi^2$, we may define 
	\begin{align*}
		\chi^{\lambda^+} = \chi^\lambda \otimes \phi^2 \mbox{ and }\chi^{\lambda^-} = \chi^\lambda \otimes \phi,
	\end{align*}
	for $\lambda \vdash n$.

	\section{The Terwilliger algebra of conjugacy class schemes}
	
	Let $G$ be a finite group with identity element $1$. For any $g\in G$, we define $g^G$ to be the conjugacy class of $G$ containing $g$. Let $\{g_0=1,g_1,\dots,g_d \}$ be a set of representatives of the distinct conjugacy classes of $G$.  The conjugacy class scheme of $G$ is the association scheme $(G,\mathcal{S})$ where $\mathcal{S} = \{ S_0,S_1,\ldots,S_d \}$, and $S_i = \left\{ (x,y)\in G\times G: x^{-1}y \in g_i^G \right\}$, for $0\leq i\leq d$. We note that $1S_i = g_i^G$ for all $0\leq i\leq d$. Note that for the conjugacy class scheme of a group $G$, the standard module $V$ can be viewed as the group algebra $\mathbb{C}G$.

	\subsection{The automorphism group} As mentioned before, the digraphs in the conjugacy class schemes are Cayley digraphs on $G$, whose connection sets are the conjugacy classes.
	
	For any $g\in G$, define the permutations $\lambda_g: G\to G$, $\rho_g: G\to G$, and $\alpha_g:G\to G$ such that for any $x\in G$
	\begin{align*}
		\begin{cases}
			\lambda_g(x)= gx,\\
			\rho_g(x)= xg^{-1},\\
			\alpha_g(x)= gxg^{-1}.
		\end{cases}
	\end{align*}
	Define the groups 
	\begin{align*}
		L_G &= \left\{ \lambda_g: g\in G \right\}\\
		R_G &= \left\{ \rho_g: g\in G \right\}\\
		\operatorname{Inn}(G) &= \left\{ \alpha_g: g\in G \right\}.
	\end{align*}
	The groups $L_G,R_G$, and $\operatorname{Inn}(G)$ are respectively the left-regular representation, the right-regular representation, and the group of inner automorphisms of $G$.
	
	Let $X_i = \operatorname{Cay}(G,g_i^G)$. By a well-known result of Sabidussi, we know that $L_G\leq \Aut{X_i}$ for any $0\leq i\leq d.$ In particular, $\Aut{\mathcal{S}}$ acts transitively on $X$ in this case. For the particular case of the conjugacy class scheme, we also claim that $R_G \leq \Aut{X_i}$ for any $0\leq i\leq d$. To see this, take two vertices $x,y\in G$ and $g\in G$. Then, by definition, we have $(x,y) \in R_i \Leftrightarrow x^{-1}y\in g_i^G$. Moreover,
	\begin{align*}
		 x^{-1}y = g^{-1}g x^{-1}yg^{-1}g = g^{-1} \left(xg^{-1}\right)^{-1}\left(yg^{-1}\right) g \in g_i^G.
	\end{align*}
	Consequently, we must have $\left(xg^{-1}\right)^{-1}\left(yg^{-1}\right) \in g_i^G$, which implies $(\rho_g(x),\rho_g(y)) \in S_i$. We conclude from this that 
	\begin{align}
		\mbox{$R_G\leq \Aut{\mathcal{S}}$.}
	\end{align}
	
	Now, define the map $\varphi: G\to G$ such that $\varphi(x) = x^{-1}$ for any $x\in G$. If every conjugacy class of $G$ is closed under inversion, then it is not hard to see that $\varphi \in \Aut{\mathcal{S}}$, and in fact $K = \left\langle L_G , R_G\right\rangle \rtimes \langle \varphi \rangle \leq \Aut{\mathcal{S}}$. This is for instance the case for the symmetric group. Note that $\left\langle L_G , R_G\right\rangle \cong \left(L_G \times R_G\right)/(L_G\cap R_G)$, and so $L_G \times R_G$ need not be a subgroup of the automorphism group of the scheme. An example where  $L_G \times R_G$ is not a subgroup of the full automorphism group is when $Z(G)$ is non-trivial. For any $g\in G$, we have $\alpha_g = \lambda_g\rho_g$ fixes the vertex $1\in G$. Moreover, the stabilizer of $1\in G$ in the subgroup $K= \left\langle L_G , R_G\right\rangle \rtimes \langle \varphi \rangle $ is 
	\begin{align}
		K_1 = \langle \operatorname{Inn}(G),\varphi \rangle = \operatorname{Inn}(G) \rtimes \langle \varphi \rangle.\label{eq:stabilizer}
	\end{align}
	
	\begin{remark}
		In Lemma~\ref{lem:T-algebra},  if $T\subset \End{H}{G}$, then $H$ need not be a subgroup of $\Aut{\mathcal{S}}$. If the action of $G$ by conjugation embeds into $\Aut{\mathcal{S}}$, then we must have $G = \operatorname{Inn}(G)$ or equivalently, $Z(G)$ is trivial. Therefore, if $Z(G)$ is non-trivial, then $G\not \hookrightarrow \Aut{\mathcal{S}}$ acting by conjugation, whereas $T \subset \End{G}{V}$.
	\end{remark}

	In the particular case of the symmetric group $\sym{n}$, we have the following result.
	\begin{theorem}\cite[Theorem~2.2]{farahat1960symmetric}
		The full automorphism group of the conjugacy class scheme of $\sym{n}$ is
		\begin{align*}
			H = \left(L_{\sym{n}} \times R_{\sym{n}}\right) \rtimes \langle \varphi \rangle,
		\end{align*}
		and the stabilizer of the identity permutation is equal to
		\begin{align*}
			\operatorname{Inn}(\sym{n}) \rtimes \langle \varphi \rangle.
		\end{align*}\label{thm:point-stabilizer}
	\end{theorem}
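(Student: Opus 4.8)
The plan is to establish the two assertions together: first dispatch the easy inclusion, then reduce the whole computation to a rigidity statement about the stabilizer of $1$. For the inclusion $H\subseteq\Aut{\mathcal{S}}$, the discussion preceding the statement already shows $L_{\sym{n}},R_{\sym{n}}\le\Aut{\mathcal{S}}$, and, since every permutation of $\sym{n}$ is conjugate to its inverse, $\varphi\in\Aut{\mathcal{S}}$. As $Z(\sym{n})$ is trivial for $n\ge3$, one checks $L_{\sym{n}}\cap R_{\sym{n}}=1$, so $H=(L_{\sym{n}}\times R_{\sym{n}})\rtimes\langle\varphi\rangle$ is genuinely a subgroup of $\Aut{\mathcal{S}}$, and the computation recorded in \eqref{eq:stabilizer} identifies its stabilizer of $1$ with $\operatorname{Inn}(\sym{n})\rtimes\langle\varphi\rangle$.

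Next I would reduce the theorem to the single claim that the point stabilizer $\Aut{\mathcal{S}}_1$ equals $\operatorname{Inn}(\sym{n})\rtimes\langle\varphi\rangle$. Since $L_{\sym{n}}$ acts regularly on $\sym{n}$, orbit–stabilizer gives $\Aut{\mathcal{S}}=L_{\sym{n}}\cdot\Aut{\mathcal{S}}_1$; feeding in the value of $\Aut{\mathcal{S}}_1$ and using $\alpha_g=\lambda_g\rho_g$ yields $L_{\sym{n}}\cdot\operatorname{Inn}(\sym{n})=L_{\sym{n}}R_{\sym{n}}$, whence $\Aut{\mathcal{S}}=L_{\sym{n}}R_{\sym{n}}\langle\varphi\rangle=H$. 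Thus everything comes down to computing the point stabilizer. To analyze $\sigma\in\Aut{\mathcal{S}}_1$, I would first translate the scheme-automorphism condition into group-theoretic form: $\sigma$ preserving every $S_i$ and fixing $1$ is equivalent to $\sigma(x)^{-1}\sigma(y)$ being conjugate to $x^{-1}y$ for all $x,y$; taking $x=1$ shows that $\sigma$ preserves each conjugacy class setwise.

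In particular $\sigma$ permutes the set $T$ of transpositions, and it respects the relation ``$s^{-1}t$ is a $3$-cycle'' versus ``$s^{-1}t$ is a double transposition'', so $\sigma$ induces an automorphism of the triangular graph $T(n)=J(n,2)$, the line graph of $K_n$. By Whitney's theorem $\operatorname{Aut}(T(n))\cong\sym{n}$ for $n\ge5$ (the cases $n\le4$ being checked directly, and the conjugacy-preserving condition already excluding the exotic outer automorphism when $n=6$, since it sends transpositions to triple transpositions). Hence $\sigma|_T$ has the form $(i\,j)\mapsto(\pi(i)\,\pi(j))$ for some $\pi\in\sym{n}$, and after replacing $\sigma$ by $\alpha_\pi^{-1}\sigma$ I may assume $\sigma$ fixes every transposition.

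The main obstacle is the final rigidity step: showing that $\tau\in\Aut{\mathcal{S}}_1$ fixing $1$ and every transposition is either the identity or $\varphi$. Here I would induct on the distance from $1$ in the transposition Cayley graph, which is connected because transpositions generate $\sym{n}$. For two transpositions $s,t$ sharing a point, the common transposition-neighbours of $s$ and $t$ other than $1$ are exactly $st$ and $ts=(st)^{-1}$, forcing $\tau(st)\in\{st,(st)^{-1}\}$; the overlaps among such configurations compel this binary ``fix versus invert'' choice to be made consistently, and propagating along the graph yields either $\tau=\operatorname{id}$ or $\tau=\varphi$ globally. Since $\varphi$ commutes with $\operatorname{Inn}(\sym{n})$ and is not inner, this gives $\sigma\in\operatorname{Inn}(\sym{n})\rtimes\langle\varphi\rangle$ and finishes the identification of the stabilizer. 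The delicate part, which I expect to absorb most of the work, is verifying the global consistency of that choice; this is precisely the content of Farahat's argument, and at this stage one could alternatively simply invoke \cite{farahat1960symmetric}.
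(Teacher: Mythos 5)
The paper does not actually prove this statement --- it is imported verbatim from Farahat \cite{farahat1960symmetric}, so there is no in-paper argument to compare against. Judged on its own terms, your outline gets the preliminary reductions right: the inclusion $H\le\Aut{\mathcal{S}}$ (using that every conjugacy class of $\sym{n}$ is inverse-closed and that $L_{\sym{n}}\cap R_{\sym{n}}=1$), the orbit--stabilizer reduction of the whole theorem to the identification of the point stabilizer via $L_{\sym{n}}\cdot\operatorname{Inn}(\sym{n})=L_{\sym{n}}R_{\sym{n}}$, the observation that a scheme automorphism fixing $1$ preserves every conjugacy class setwise, and the reduction via $\operatorname{Aut}(T(n))\cong\sym{n}$ to the case where $\sigma$ fixes every transposition. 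The computation that the common neighbours of two intersecting transpositions $s,t$ in $\operatorname{Cay}(\sym{n},T)$ are exactly $\{1,st,ts\}$ is also correct.

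The genuine gap is the final rigidity step, which is the entire substance of the theorem. You assert that the local binary choices $\tau(st)\in\{st,(st)^{-1}\}$ must be made consistently and that propagation along the transposition Cayley graph forces $\tau=\operatorname{id}$ or $\tau=\varphi$, but no argument is given --- and you concede as much by saying one ``could alternatively simply invoke Farahat.'' Concretely, two things are missing: (a) a proof that the fix-versus-invert choice is the same for every support triple (this requires checking, for instance, that inverting the $3$-cycles on $\{1,2,3\}$ but not those on $\{1,2,4\}$ changes the cycle type of $(123)^{-1}(124)$ from a $3$-cycle to a double transposition, contradicting relation-preservation); and (b) an argument that fixing $1$, all transpositions, and all $3$-cycles (respectively, inverting all $3$-cycles) determines $\tau$ on every remaining conjugacy class --- the induction on distance from $1$ is named but never set up, and it is exactly here that most of Farahat's work lies. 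Two smaller points: for $n=4$ the Whitney step genuinely fails, since $\operatorname{Aut}(T(4))$ has order $48>24$ (complementation of $2$-subsets is a graph automorphism of the octahedron), so ``checked directly'' is concealing a real case; and the worry about the exotic outer automorphism at $n=6$ is misplaced --- $\operatorname{Aut}(T(6))\cong\sym{6}$ without exception, and the outer automorphism was already excluded because it permutes the relations $S_i$ rather than preserving each one.
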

	We note that $\operatorname{Inn}(\sym{n}) \rtimes \langle \varphi \rangle = \operatorname{Inn}(\sym{n}) \times \langle \varphi \rangle$, since conjugation by an element and inversion commute in $\Aut{\mathcal{S}}$.

		
	\subsection{Bounds on the dimension of $T$}

	The next result follows from the fact that $T_0 \subset T \subset \End{G}{V}$, where $G$ acts on itself by conjugation. See \cite{BannaiMunemasa95} for the proof.
	\begin{proposition}
		We have that 
		\begin{align}
			\dim(T_0) = \left|\left\{(i,j,k):p_{ij}^k\neq 0 \right\}\right|\leq \dim T \leq   \dim \End{G}{V}=  \sum_{i=0}^d \frac{|G|}{|g_i^G|}.
		\end{align}\label{prop:bound}
	\end{proposition}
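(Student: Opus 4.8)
The plan is to establish the chain of subspaces $T_0 \subseteq T \subseteq \End{G}{V}$, where $G$ acts on itself by conjugation, and then to compute the two outer dimensions directly: the two inclusions yield the two inequalities, while the endpoints give the claimed closed forms. The inclusion $T_0 \subseteq T$ is immediate, since every switching product $E_i^* A_j E_k^*$ of length $1$ is a product of generators of $T$ and $T_0$ is by definition their span. For the inclusion $T \subseteq \End{G}{V}$, I would invoke Lemma~\ref{lem:T-algebra} with $H$ the stabilizer of the identity under conjugation, namely $\operatorname{Inn}(G)$: this gives $T = T_1(\mathcal{S}) \subseteq \End{\operatorname{Inn}(G)}{V}$, and since a centralizer algebra depends only on the image of the acting group in $\operatorname{GL}(V)$, we have $\End{\operatorname{Inn}(G)}{V} = \End{G}{V}$ for the conjugation action.

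For the left-hand equality I would determine exactly when a switching product is nonzero and then show the nonzero ones are linearly independent. Writing out the $(z,w)$-entry of $E_i^* A_j E_k^*$, it equals $1$ precisely when $(x,z) \in S_i$, $(z,w) \in S_j$, and $(x,w) \in S_k$, and is $0$ otherwise. Fixing any $w$ with $(x,w) \in S_k$ (such $w$ exists because $S_k$ has positive valency), the number of admissible $z$ is exactly the intersection number $p_{ij}^k$, independent of the choice of $w$. Hence $E_i^* A_j E_k^* \neq 0$ if and only if $p_{ij}^k \neq 0$, so the number of nonzero switching products of length $1$ equals $|\{(i,j,k): p_{ij}^k \neq 0\}|$.

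To convert this count into a dimension I must verify linear independence. The idempotents $E_0^*, \ldots, E_d^*$ are orthogonal and sum to the identity, because the sets $xS_0, \ldots, xS_d$ partition $X$; they therefore decompose $\operatorname{M}_{|X|}(\mathbb{C})$ into blocks $E_i^* \operatorname{M}_{|X|}(\mathbb{C}) E_k^*$, so products with distinct $(i,k)$ have disjoint supports. Within a fixed block, the $(z,w)$-entry of $E_i^* A_j E_k^*$ can be nonzero only when $(z,w) \in S_j$, and since $\{S_0, \ldots, S_d\}$ partitions $X \times X$, distinct values of $j$ again give disjoint supports. Thus all nonzero length-$1$ switching products have pairwise disjoint $0/1$ supports and are consequently linearly independent, yielding $\dim(T_0) = |\{(i,j,k): p_{ij}^k \neq 0\}|$.

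It then remains to compute $\dim \End{G}{V}$ for the conjugation action, and the cleanest route is through the permutation character $\pi$. Since $\pi(g)$ counts the elements of $G$ fixed by conjugation by $g$, we have $\pi(g) = |C_G(g)|$. By Lemma~\ref{lem:number-of-orbitals} (equivalently $\langle \pi, \pi\rangle$, in line with Lemma~\ref{lem:decomposition-centralizer}), $\dim \End{G}{V} = \frac{1}{|G|}\sum_{g \in G} |C_G(g)|^2$; grouping the sum by conjugacy class and using $|g_i^G| = |G|/|C_G(g_i)|$ collapses it to $\sum_{i=0}^d |C_G(g_i)| = \sum_{i=0}^d \frac{|G|}{|g_i^G|}$. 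I expect the main obstacle to lie in the first equality rather than the last: care is needed to confirm that the nonvanishing criterion is exactly $p_{ij}^k \neq 0$ and that the block-and-relation support argument genuinely forces disjointness, whereas the centralizer dimension is a routine character computation.
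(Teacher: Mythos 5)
Your proposal is correct and follows exactly the route the paper indicates: the paper does not prove Proposition~\ref{prop:bound} itself but states that it follows from the chain $T_0 \subseteq T \subseteq \End{G}{V}$ (citing Bannai--Munemasa), and your argument simply fills in that chain together with the two endpoint dimension computations. Both the disjoint-support argument identifying $\dim(T_0)$ with the count of nonzero intersection numbers and the permutation-character computation of $\dim\End{G}{V}$ are the standard verifications intended here, so no further comparison is needed.
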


	\subsection{The centralizer algebra}
	 As we  saw in Lemma~\ref{lem:number-of-orbitals} and Lemma~\ref{lem:decomposition-centralizer}, certain properties of the centralizer of a group $H \leq \Aut{\mathcal{S}}$ depend entirely on the action of $H$. We give some particular examples for the conjugacy class scheme. By noting that $\End{\operatorname{Inn}(G)}{V} = \End{G}{V}$ where $G$ acts by conjugation on itself, we prove the following.
	 \begin{lemma}
	 	Recall that the conjugacy classes of $G$ are $g_0^G = 1^G = \{1\},g_1^G,\ldots,g_d^{G}$. Consider the group $G$ acting on itself by conjugation. Then, we have
	 	\begin{align*}
	 		\End{G}{V} =\End{\operatorname{Inn}(G)}{V} = \bigoplus_{\chi \in \operatorname{Irr}(G)} \operatorname{M}_{d_\chi}(\mathbb{C}),
	 	\end{align*}
	 	where 
	 	\begin{align*}
	 		d_\chi = \sum_{i=0}^d \chi(g_i)
	 	\end{align*}
	 	 is the row sum corresponding to the irreducible character $\chi$ in the character table.\label{lem:row-sum}
	 \end{lemma}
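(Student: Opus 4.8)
The plan is to reduce the statement to a multiplicity computation via the general decomposition of centralizer algebras given in Lemma~\ref{lem:decomposition-centralizer}, and then evaluate the relevant inner products explicitly using the orbit-counting structure of the conjugation action. First I would record the identification $\End{\operatorname{Inn}(G)}{V} = \End{G}{V}$: the conjugation action of $G$ on itself realises each $g\in G$ as the permutation $x\mapsto gxg^{-1}$, and the associated permutation matrices are exactly those coming from $\operatorname{Inn}(G)$; since a centralizer algebra depends only on the set of matrices being centralized, the two endomorphism algebras coincide. This lets me work with the (faithful) action of $\operatorname{Inn}(G)\le \Aut{\mathcal{S}}$ and apply the earlier lemmas.

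Next I would pin down the permutation character $\pi$ of the conjugation action. By definition $\pi(g)$ is the number of $x\in G$ fixed by $g$, i.e. the number of $x$ with $gxg^{-1}=x$, which is precisely the order of the centralizer $|C_G(g)|$. Lemma~\ref{lem:decomposition-centralizer} then gives
\begin{align*}
	\End{G}{V} = \bigoplus_{\chi\in\operatorname{Irr}(G)} \operatorname{M}_{\langle \pi,\chi\rangle}(\mathbb{C}),
\end{align*}
so it suffices to show that for each $\chi\in\operatorname{Irr}(G)$ the multiplicity $\langle \pi,\chi\rangle$ equals the row sum $d_\chi=\sum_{i=0}^d \chi(g_i)$. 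I would compute this by grouping the standard inner product over the conjugacy classes $g_0^G,\dots,g_d^G$, on each of which both $|C_G(g)|$ and $\chi(g)$ are constant:
\begin{align*}
	\langle \pi,\chi\rangle
	= \frac{1}{|G|}\sum_{g\in G}|C_G(g)|\,\overline{\chi(g)}
	= \frac{1}{|G|}\sum_{i=0}^d |g_i^G|\,|C_G(g_i)|\,\overline{\chi(g_i)}.
\end{align*}
Using $|g_i^G| = |G|/|C_G(g_i)|$, the centralizer orders cancel, leaving $\langle \pi,\chi\rangle = \sum_{i=0}^d \overline{\chi(g_i)}$.

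The final step is to reconcile this conjugated row sum with the unconjugated one appearing in the statement: since $\langle \pi,\chi\rangle$ is the multiplicity of an irreducible constituent, it is a nonnegative integer, hence real, so $\sum_{i=0}^d \overline{\chi(g_i)}$ equals its own complex conjugate $\sum_{i=0}^d \chi(g_i) = d_\chi$. This gives the claimed Wedderburn decomposition. The computation is elementary, so there is no serious obstacle; the only subtle point—and the one I would state carefully—is precisely this reality argument, which is what guarantees that the naive row sum $\sum_i \chi(g_i)$ (rather than the conjugate sum that the character-inner-product naturally produces) records the correct block size, and in particular that each $d_\chi$ is a nonnegative integer.
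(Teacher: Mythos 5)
Your proposal is correct and follows essentially the same route as the paper: both reduce the claim to Lemma~\ref{lem:decomposition-centralizer}, identify $\pi(g)=|C_G(g)|$, and cancel the centralizer orders against the class sizes to obtain the row sum. The only cosmetic difference is that the paper disposes of the complex conjugate by writing $\langle\pi,\chi\rangle=\langle\chi,\pi\rangle$ at the outset, whereas you carry $\overline{\chi}$ through the computation and invoke the reality of the multiplicity at the end; these are the same observation.
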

	 \begin{proof}
	 	Let $\pi$ be the permutation character of $G$ acting on itself by conjugation. By Lemma~\ref{lem:decomposition-centralizer}, we have
	 	\begin{align*}
	 		d_\chi = \langle \pi,\chi \rangle&= \langle \chi,\pi \rangle\\ &= \frac{1}{|G|} \sum_{i=0}^d \pi(g_i) \chi(g_i)|g_i^G| \\
	 		&= \frac{1}{|G|} \sum_{i=0}^d |\operatorname{C}_G(g_i)| \chi(g_i)|g_i^G|\\
	 		&= \sum_{i=0}^d \chi(g_i).
	 	\end{align*}
	 	This completes the proof.
	 \end{proof}

	 \begin{remark}
	 	An immediate consequence of Lemma~\ref{lem:row-sum} is that the row sums of the character table of any given group are non-negative.  For the symmetric group $\sym{n}$, Frumkin \cite{Frumkin1986} proved that the row sums of the character table are positive for any $n \ge 3$.  An open problem posed by Stanley \cite{Stanley2000} asks for a combinatorial proof for the non-negativity of the row sums of the character table of $\sym{n}$. 
	 \end{remark}

	\subsection{The centrally primitive idempotents}
	Given a semisimple algebra $\mathcal{A}$, we let $CPI(\mathcal{A})$ be the set of all centrally primitive idempotents of $\mathcal{A}$. 
	
	For any nonempty set $X$ and $x,y\in X$, the matrix $\mathcal{E}_{(x,y)}$ is defined to be the matrix indexed in its rows and columns by $X$, with the property that the $(u,v)$-entry of $\mathcal{E}_{(x,y)}$ is $1$ if $(u,v) = (x,y) $, and zero otherwise.  
    Let $H$ be a finite group acting on the set $X$, let $V = \mathbb{C}X$, and let $\pi$ be the corresponding permutation character.  If $\chi \in Irr(H)$ is such that $\langle \pi,\chi \rangle \neq 0$, then the primitive idempotent giving rise to the Wedderburn component of $\End{H}{V}$ is 
	$$ \epsilon_{\chi} = \frac{\chi(1)}{|H|} \sum_{h \in H} \overline{\chi(h)} M_h, \mbox{ where } M_h = \sum_{x \in X} \mathcal{E}_{(x,x^h)}.$$
	
	Now, when we take $H = \Aut{\mathcal{S}}$ to be the automorphism group of the conjugacy class scheme of the finite group $G$, since $\tilde{T} = \End{H}{V}$ is the centralizer algebra for the image of a representation of the group algebra $\mathbb{C}H_1$, it comes with a natural formula for its centrally primitive idempotents: if $R$ is any representation of $\mathbb{C}H_1$, let $$\rho_R = \sum_{\chi \in Irr(H_1)} m_{\chi} \chi$$ be the decomposition of $\rho_R$ in terms of the irreducible characters of $H_1$.  Then for each $\chi \in Irr(H_1)$, the image under $R$ of the centrally primitive idempotent 
	$$e_{\chi} = \frac{\chi(1)}{|H_1|} \sum_{h \in H_1} \chi(h^{-1}) h$$
	will be $0$ when $m_{\chi}=0$ and a centrally primitive idempotent of $R(\mathbb{C}H_1)$ when $m_{\chi} > 0$.  By a double centralizer argument, these are also the centrally primitive idempotents of $\tilde{T}$, and the dimension of the simple algebra $\tilde{T}R(e_{\chi})$ is $m_{\chi}^2$.  Since the Terwilliger algebra $T$ is a semisimple unital subalgebra of $\tilde{T}$, there is a relationship between its centrally primitive idempotents and those of $\tilde{T}$. 
	
	\begin{lemma} 
	Suppose $\mathcal{B}$ is a semisimple unital subalgebra of the semisimple algebra $\mathcal{A}$.  Let $CPI(\mathcal{A}) = \{ e_1, \dots, e_{\ell} \}$.  
	For each $e_i \in CPI(\mathcal{A})$, we have the following possibilities.   
	\begin{enumerate}[(i)]
	\item If $e_i \in \mathcal{B}$, and $e_i$ is primitive in $Z(\mathcal{B})$, then $e_i \in CPI(\mathcal{B})$ and $\mathcal{B}e_i$ is a simple subalgebra of the simple algebra $\mathcal{A}e_i$. 
		
	\item If $e_i \in \mathcal{B}$, and $e_i$ is not primitive in $Z(\mathcal{B})$, then $e_i = e'_{i,1} + \dots + e'_{i,k}$ for $e'_{i,j} \in CPI(\mathcal{B})$ with $k \ge 2$. In this case $\sum_{j=1}^k \dim(\mathcal{B} e'_{i,j}) = \dim(\mathcal{B}e_i) \le \dim \mathcal{A} e_i$. 
		
	\item If $e_i \not\in \mathcal{B}$, then there is an $e' \in CPI(\mathcal{B})$ such that $e_i e' \ne 0$.  For any such $e'$, $\mathcal{B}e' \simeq \mathcal{B}e'e_i \subseteq \mathcal{A}e_i$.
	
    Furthermore, if $F = \{e_j \in CPI(\mathcal{A}) : e_j \not\in \mathcal{B} \}$ and $K = \{ e'_k \in CPI(\mathcal{B}) : e'_k \not\in Z(\mathcal{A}) \}$, then $\sum_{e_j \in F} e_j = \sum_{e'_k \in K} e'_k$.  So, $\sum_{e'_k \in K} \dim(\mathcal{B} e'_k) \le \sum_{e_j \in F} \dim(\mathcal{A} e_j)$. \label{third}
	\end{enumerate}\label{lem:cpi}
	\end{lemma}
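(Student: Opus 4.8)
\emph{Proof proposal.} The plan is to read everything off the two orthogonal decompositions of the identity. Since $\mathcal{A}$ and $\mathcal{B}$ are semisimple and share the unit, I would begin by writing $1 = \sum_{e_i \in CPI(\mathcal{A})} e_i = \sum_{e'_k \in CPI(\mathcal{B})} e'_k$, where the $e_i$ are the atoms of $Z(\mathcal{A})$ and the $e'_k$ the atoms of $Z(\mathcal{B})$. The first fact to record is that any $e_i \in CPI(\mathcal{A})$ that lies in $\mathcal{B}$ automatically lies in $Z(\mathcal{B})$: being central in $\mathcal{A} \supseteq \mathcal{B}$, it commutes with every element of $\mathcal{B}$. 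Granting this, part (i) is immediate, for if such an $e_i$ is primitive in $Z(\mathcal{B})$ then it is by definition a centrally primitive idempotent of $\mathcal{B}$, so $\mathcal{B}e_i$ is simple; and since $e_i$ is the common identity of $\mathcal{B}e_i$ and $\mathcal{A}e_i$, the inclusion $\mathcal{B} \subseteq \mathcal{A}$ restricts to an inclusion of simple algebras $\mathcal{B}e_i \subseteq \mathcal{A}e_i$. For part (ii), if $e_i \in \mathcal{B}$ is not primitive in $Z(\mathcal{B})$, I would decompose it inside the commutative semisimple algebra $Z(\mathcal{B})$ as an orthogonal sum $e_i = e'_{i,1} + \dots + e'_{i,k}$ of atoms, necessarily with $k \ge 2$; then $\mathcal{B}e_i = \bigoplus_{j} \mathcal{B}e'_{i,j}$ gives $\sum_{j} \dim(\mathcal{B}e'_{i,j}) = \dim(\mathcal{B}e_i)$, and the inclusion $\mathcal{B}e_i \subseteq \mathcal{A}e_i$ supplies the bound $\dim(\mathcal{B}e_i) \le \dim(\mathcal{A}e_i)$.

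For part (iii), suppose $e_i \notin \mathcal{B}$. Multiplying $1 = \sum_{k} e'_k$ by the central idempotent $e_i$ yields $e_i = \sum_{k} e_i e'_k$, so the products $e_i e'_k$ cannot all vanish; this produces a centrally primitive idempotent $e'$ of $\mathcal{B}$ with $e_i e' \ne 0$. For any such $e'$, I would view multiplication by $e_i$ as a map $\mathcal{B}e' \to \mathcal{B}e'e_i$. Because $e_i$ is central this is an algebra homomorphism onto its image $\mathcal{B}e'e_i$, and it is nonzero since $e' \mapsto e'e_i = e_i e' \ne 0$; as $\mathcal{B}e'$ is simple its kernel must be zero, so the map is an isomorphism $\mathcal{B}e' \cong \mathcal{B}e'e_i$. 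Finally $\mathcal{B}e'e_i \subseteq \mathcal{A}e_i$, since $\mathcal{B}e' \subseteq \mathcal{A}$ and we have multiplied by $e_i$. This establishes the embedding asserted in (iii).

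For the concluding identity I would compare the two decompositions of $1$ after cancelling their common terms. The idempotents appearing in both lists, namely $CPI(\mathcal{A}) \cap CPI(\mathcal{B})$, are exactly those treated in (i); let $z$ denote their sum. Subtracting $z$ from $1 = \sum_{CPI(\mathcal{A})} e_i$ and from $1 = \sum_{CPI(\mathcal{B})} e'_k$ shows that the sum of the surviving centrally primitive idempotents of $\mathcal{A}$ equals the sum of the surviving centrally primitive idempotents of $\mathcal{B}$, giving the equality $\sum_{e_j \in F} e_j = \sum_{e'_k \in K} e'_k =: w$. Since $w$ is central in both algebras and $\mathcal{B} \subseteq \mathcal{A}$, the component $\mathcal{B}w = \bigoplus_{e'_k \in K} \mathcal{B}e'_k$ sits inside $\mathcal{A}w = \bigoplus_{e_j \in F} \mathcal{A}e_j$, which yields the dimension bound $\sum_{e'_k \in K} \dim(\mathcal{B}e'_k) \le \sum_{e_j \in F} \dim(\mathcal{A}e_j)$. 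The step that needs genuine care, and the one I expect to be the main obstacle, is precisely this bookkeeping: one must check that $F$ and $K$ coincide with the complements of the shared list $z$ on the $\mathcal{A}$ and $\mathcal{B}$ sides, which requires separating the case-(ii) idempotents (those lying in $\mathcal{B}$ but failing to be primitive in $Z(\mathcal{B})$) and their mirror images on the $\mathcal{B}$ side (centrally primitive idempotents of $\mathcal{B}$ that lie in $Z(\mathcal{A})$ without being primitive there) from the genuinely ``new'' idempotents of case (iii). Making the defining conditions of $F$ and $K$ match the primitivity conditions in $Z(\mathcal{A})$ and $Z(\mathcal{B})$ is where the argument must be pinned down carefully.
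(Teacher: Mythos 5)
Your arguments for (i), (ii), and the embedding claim in (iii) are correct. For (iii) your route is in fact more direct than the paper's: you observe that multiplication by the central idempotent $e_i$ is a nonzero algebra homomorphism out of the simple algebra $\mathcal{B}e'$, hence injective, whereas the paper passes through the two-sided ideal $\mathcal{A}e'\mathcal{A}$, the idempotent $f = \sum_{e_je' \neq 0} e_j$, and a case split on whether $e' = f$. Both work; yours is shorter.

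The genuine gap is in the ``furthermore'' identity, exactly where you predicted. Your plan --- cancel from both resolutions of $1$ the sum $z$ of the common idempotents $CPI(\mathcal{A}) \cap CPI(\mathcal{B})$ --- does not produce $\sum_{e_j \in F} e_j$ on the $\mathcal{A}$ side: $F$ consists of the $e_j \notin \mathcal{B}$, while $1 - z$ still contains every case-(ii) idempotent (those $e_i \in \mathcal{B}$ that fail to be primitive in $Z(\mathcal{B})$), and these are not in $F$. The paper instead subtracts the full sum over $CPI(\mathcal{A}) \cap \mathcal{B}$: since $\sum_{e_j \in F} e_j = 1 - \sum_{e_i \in CPI(\mathcal{A}) \cap \mathcal{B}} e_i$ lies in $\mathcal{B} \cap Z(\mathcal{A}) \subseteq Z(\mathcal{B})$, it is the sum of a uniquely determined subset of $CPI(\mathcal{B})$, namely those $e'$ annihilated by every $e_i \in CPI(\mathcal{A}) \cap \mathcal{B}$, i.e.\ those not consumed in parts (i) and (ii). Your closing worry is well founded for a second reason: this leftover subset does not literally coincide with $K = \{ e' \in CPI(\mathcal{B}) : e' \notin Z(\mathcal{A})\}$ whenever case (ii) actually occurs, because the fragments $e'_{i,j}$ of a case-(ii) idempotent are never central in $\mathcal{A}$ (otherwise $e'_{i,j}$ and $e_i - e'_{i,j}$ would split the primitive $e_i$ inside $Z(\mathcal{A})$), so they belong to $K$ as defined while being absorbed on the other side of the equation. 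To close the argument you should prove the identity with $K$ read as ``the elements of $CPI(\mathcal{B})$ orthogonal to every $e_i \in CPI(\mathcal{A}) \cap \mathcal{B}$''; the dimension inequality then follows, as you say, from $\mathcal{B}w \subseteq \mathcal{A}w$ for the common central idempotent $w = \sum_{e_j \in F} e_j$.
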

	
	\begin{proof}
		(i) Let $e_i \in CPI(\mathcal{A})$.  If $e_i \in \mathcal{B}$, then $e_i \in Z(\mathcal{A}) \cap \mathcal{B} \subseteq Z(\mathcal{B})$.  If $e_i$ is primitive in $Z(\mathcal{B})$ then $e_i \in CPI(\mathcal{B})$.
		
		(ii) Let $e_i \in CPI(\mathcal{A})$, and suppose $e_i \in \mathcal{B}$.  As in (i), $e_i \in Z(\mathcal{B})$.  If $e_i$ not primitive in $Z(\mathcal{B})$, it will be a sum of $k \ge 2$ idempotents in $CPI(\mathcal{B})$, which are necessarily not central in $\mathcal{A}$, and $\mathcal{B} e_i$ the direct sum of the simple components of $\mathcal{B}$ that they determine.  
		
		(iii) Let $e_i \in CPI(\mathcal{A})$.  Suppose $e_i \not\in \mathcal{B}$. Since the sum of the elements of $CPI(\mathcal{B})$ is $1$, there is at least one $e' \in CPI(\mathcal{B})$ for which $e_ie' \ne 0$.  For this $e'$, the two-sided ideal $\mathcal{A} e' \mathcal{A}$ is equal to $\oplus_j \mathcal{A}e_j$, as $e_j$ runs over $C = \{e_j \in CPI(\mathcal{A}) : e_je' \ne 0 \}$.  It follows that $f = \sum_{e_j \in C} e_j$ is the identity of $\mathcal{A}e'\mathcal{A}$, and multiplication by $f$ is the projection map from $\mathcal{A}$ to $\mathcal{A}e'\mathcal{A}$. 

        Now, the simple algebra $\mathcal{B}e'$ has identity $e'$, and embeds into $\mathcal{A}e'\mathcal{A}$.  This embedding is unital if and only if $e' = f$.  In this case,  
        $$ \mathcal{B}e' = \mathcal{B}(\sum_{e_j \in C} e_j) \hookrightarrow \oplus_{e_j \in C} \mathcal{B}e_j \subseteq \oplus_{e_j \in C} \mathcal{A} e_j, $$
        so $\mathcal{B}e'$ embeds diagonally in $\oplus_{e_j \in C} Be_j$, with $\mathcal{B}e' \simeq \mathcal{B}e_i$.      If $e' \ne f$, then 
        $$\mathcal{B} e' = \mathcal{B} e' f \subseteq \mathcal{B} f = \oplus_{e_j \in C} \mathcal{B} e_j \subseteq \oplus_{e_j \in F} \mathcal{A}e_j. $$
        Since $\mathcal{B}e'$ is simple and $e'e_i \ne 0$, we have $\mathcal{B} e' \simeq \mathcal{B} e' e_i$, which embeds into $\mathcal{A}e_i$.  

        In general, we can sort the idempotents in $CPI(\mathcal{A})$ so that 
        $$\sum_{e_j \not\in \mathcal{B}}e_j = 1 - \sum_{e_k \in \mathcal{B}} e_k \in \mathcal{B},$$
        and hence this sum will be an idempotent in $Z(\mathcal{B})$.  Therefore, it is equal to the sum of the elements of $CPI(\mathcal{B})$ that did not occur in parts (i) and (ii).  These are the elements of $CPI(\mathcal{B})$ that are not elements of $Z(\mathcal{A})$.  Since $\mathcal{B}e \subseteq \mathcal{A}e$ for any idempotent $e \in Z(\mathcal{A})$, our dimension assertion follows. 
	\end{proof}
	
	Lemma~\ref{lem:cpi} will be particularly useful when $\dim(T)$ is close to $\dim(\tilde{T})$, since it forces the irreducible representations of largest dimension to match.  
	\begin{corollary}
		If $\dim(\tilde{T}) - \dim(T)$ is less than $m_{\chi}^2 - (m_{\chi}-1)^2$, for some $e_{\chi} \in CPI(\tilde{T})$, then we must have $T e_{\chi} = \tilde{T} e_{\chi}$.\label{cor:large-dim}
	\end{corollary}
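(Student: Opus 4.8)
The plan is to argue by contradiction using the structural dictionary established in Lemma~\ref{lem:cpi}, specialized to the inclusion $\mathcal{B} = T \subseteq \mathcal{A} = \tilde{T}$. Suppose for contradiction that $T e_{\chi} \ne \tilde{T} e_{\chi}$ for the given $e_{\chi} \in CPI(\tilde{T})$, where $\dim(\tilde{T} e_{\chi}) = m_{\chi}^2$. I want to show this forces a drop in dimension strictly larger than the allowed gap $\dim(\tilde{T}) - \dim(T)$, yielding the contradiction. The key observation is that each Wedderburn component of $\tilde{T}$ contributes $m_{\chi}^2$ to $\dim(\tilde{T})$, and Lemma~\ref{lem:cpi} controls, case by case, how much of this survives in the subalgebra $T$.

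First I would dispose of the case $e_{\chi} \in T$. If $e_{\chi} \in T$ and $e_{\chi}$ is primitive in $Z(T)$, then by Lemma~\ref{lem:cpi}(i) we have $e_{\chi} \in CPI(T)$ and $T e_{\chi}$ is simple of dimension $r^2$ for some $r \le m_{\chi}$; if additionally $Te_\chi \ne \tilde{T}e_\chi$ then $r \le m_{\chi}-1$. If instead $e_{\chi}$ is not primitive in $Z(T)$, then by Lemma~\ref{lem:cpi}(ii) it splits as a sum of $k \ge 2$ primitive idempotents of $T$, and $\dim(T e_{\chi}) = \sum_{j} r_j^2$ where $\sum_j r_j \le m_{\chi}$ (since the associated $T$-modules embed into the $m_{\chi}$-dimensional isotypic space of $\tilde{T}e_\chi$). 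In either subcase the dimension sacrificed on this block is
\begin{align*}
m_{\chi}^2 - \dim(T e_{\chi}) \ge m_{\chi}^2 - (m_{\chi}-1)^2,
\end{align*}
because a single simple algebra of dimension $r^2$ with $r \le m_{\chi}-1$, or a sum of squares of parts summing to at most $m_\chi$ with at least two parts, can be at most $(m_{\chi}-1)^2$. The remaining case $e_{\chi} \notin T$ is handled by Lemma~\ref{lem:cpi}(iii): then $e_\chi$ lies in the set $F$, and the accompanying inequality $\sum_{e'_k \in K} \dim(T e'_k) \le \sum_{e_j \in F} \dim(\tilde{T} e_j)$ together with $e_\chi \in F$ shows that the total contribution of the $F$-blocks to $\dim(T)$ is at most $\dim(\tilde{T}e_{\chi}) = m_\chi^2$ replaced by a strictly smaller quantity, again forcing a shortfall of at least $m_{\chi}^2 - (m_{\chi}-1)^2$ on the portion of $\dim(\tilde{T})$ coming from these components.

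In all cases, the assumption $Te_{\chi} \ne \tilde{T}e_{\chi}$ produces a deficit $\dim(\tilde{T}) - \dim(T) \ge m_{\chi}^2 - (m_{\chi}-1)^2$, contradicting the hypothesis that this difference is strictly smaller. I expect the main obstacle to be the bookkeeping in case (iii), where $e_{\chi} \notin T$ and one must ensure that the diagonal or off-diagonal embeddings of the simple components of $T$ into the $F$-blocks of $\tilde{T}$ genuinely cost at least the claimed amount on the $e_{\chi}$-block, rather than being compensated by other components; making the inequality $\sum_{e'_k \in K}\dim(Te'_k) \le \sum_{e_j \in F}\dim(\tilde{T}e_j)$ do the work requires care that no overcounting inflates $\dim(T)$ back up. Once the three cases are shown to each force the same minimal deficit, the contrapositive delivers the corollary.
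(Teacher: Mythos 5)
Your overall strategy---running the contrapositive through the three cases of Lemma~\ref{lem:cpi} with $\mathcal{B}=T$ and $\mathcal{A}=\tilde{T}$---is exactly the route the paper intends (the corollary is stated there without a separate proof, as an immediate consequence of that lemma), and your case (i) together with the reduction $\dim(\tilde{T})-\dim(T)\ge \dim(\tilde{T}e_{\chi})-\dim(Te_{\chi})$ is fine.

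The genuine gap is in your case (ii) estimate. You assert that a sum of squares of parts summing to at most $m_{\chi}$, with at least two parts, is at most $(m_{\chi}-1)^2$. This is false: the partition $m_{\chi}=(m_{\chi}-1)+1$ gives $\sum_j r_j^2=(m_{\chi}-1)^2+1$, realized by the unital semisimple subalgebra $\operatorname{M}_{m_{\chi}-1}(\mathbb{C})\oplus\mathbb{C}$ of $\operatorname{M}_{m_{\chi}}(\mathbb{C})$ embedded block-diagonally. Hence in case (ii) the deficit you can force is only $m_{\chi}^2-\bigl((m_{\chi}-1)^2+1\bigr)=2m_{\chi}-2$, one less than the threshold $m_{\chi}^2-(m_{\chi}-1)^2=2m_{\chi}-1$ appearing in the hypothesis. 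Concretely, your argument does not exclude the configuration $\tilde{T}\simeq\operatorname{M}_3(\mathbb{C})$, $T\simeq\operatorname{M}_2(\mathbb{C})\oplus\mathbb{C}$, for which $\dim(\tilde{T})-\dim(T)=4<5=m_{\chi}^2-(m_{\chi}-1)^2$ and yet $Te_{\chi}\ne\tilde{T}e_{\chi}$; this shows the statement cannot be established at this level of generality, so either the threshold must be lowered to $m_{\chi}^2-(m_{\chi}-1)^2-1$ or one must supply an extra argument ruling out a one-dimensional simple component of $T$ under $e_{\chi}$. Ironically, case (iii), which you flagged as the main worry, is the one that does work: if $e_{\chi}\notin T$, some $e'\in CPI(T)$ with $e'e_{\chi}\ne 0$ must also meet a second block $e_j$, and the resulting diagonal embedding costs an additional $\dim(Te')\ge 1$ on top of $m_{\chi}^2-\dim(Te_{\chi})\ge 2m_{\chi}-2$, recovering the required $2m_{\chi}-1$; your write-up of that case is too vague to extract this, but it can be made precise along the lines of the decomposition $\dim(\tilde{T})-\dim(T)=\sum_i\bigl(m_i^2-\dim(Te_i)\bigr)+\sum_{e'}(c_{e'}-1)\dim(Te')$, where $c_{e'}$ counts the blocks of $\tilde{T}$ met by $e'$.
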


	\section{Revisiting the Terwilliger algebra of $\sym{n}$ for $3\leq n\leq 6$}\label{sect:revisit}
	Let $n\geq 3$, and consider the symmetric group $\sym{n}$. For any $\mu\vdash n$,  let $C_\mu$ be the conjugacy class whose elements have cycle type $\mu$. The character table of the conjugacy class scheme of the group $\sym{n}$ is the central character normalization of the usual character table of the group. That is, if $C$ is the character table of $\sym{n}$, then for any $\lambda,\mu \vdash n$ the $(\lambda,\mu)$-entry of $C$ is $\chi_\mu^\lambda$. If $P$ is the character table of the conjugacy class scheme of $\sym{n}$, then the $(\lambda,\mu)$-entry of $P$, for $\lambda,\mu \vdash n$, is equal to $\frac{\chi_{\mu}^\lambda|C_\mu|}{f^\lambda}$. Moreover, the multiplicity of the eigenvalue $\frac{\chi_{\mu}^\lambda|C_\mu|}{f^\lambda}$ is $\left(f^\lambda\right)^2$.
	
	Recall that the full automorphism group of the conjugacy class scheme of $\sym{n}$ is the group $H$ given in Theorem~\ref{thm:point-stabilizer}. The stabilizer of the identity element of $\sym{n}$ is denoted by $H_1$. For every $n\geq 3$, the block dimension decomposition of $\tilde{T}=\End{H_1}{V}$ is the table which is indexed in its rows and columns by partitions of $n$, and whose entries are the dimensions of the subspaces of the form $E_\mu^{*}\tilde{T}E_{\lambda}^*$, for $\lambda,\mu \vdash n$. A consequence of Lemma~\ref{lem:number-of-orbitals} is that the subspace $E_\mu^{*}\tilde{T}E_{\lambda}^*$, for $\lambda,\mu \vdash n$, has dimension equal to the number of orbitals of $H_1$, with representatives belonging to $C_\mu \times C_\lambda$.
	
	Similarly, we define the block dimension decomposition of $T$ and $T_i$ for $i\geq 0$, in the same manner. However, the dimensions cannot be computed using the orbitals of $H_1$, the same as in the case of $\tilde{T}$.

	\subsection{The case when $n=3$}
Let $\mathfrak{X}_3$ be the conjugacy class scheme of $\sym{3}$. The character table of $\sym{3}$ is equal to 

	\begin{table}[H]
		\begin{tabular}{c||ccc}
			$\sym{3}$& $\left[1^3\right]$ & $\left[2, 1\right]$ & $\left[3\right]$ \\ \hline \hline
			$\left[3\right]$ & $1$ & $1$ & $1$ \\
			$\left[2, 1\right]$ & $2$ & $0$ & $-1$ \\
			$\left[1^3\right]$ & $1$ & $-1$ & $1$ \\
		\end{tabular}
		\caption{Character table of $\sym{3}$.}
	\end{table}
	
	Let $T$ be the Terwilliger algebra of $\mathfrak{X}_{3}$, $H$ its automorphism group, $\hat{T} = \End{\sym{3}}{V}$ where $\sym{3}$ acts by conjugation, and $\tilde{T} = \End{H_{1}}{V}$, where $H_{1}$ is the stabilizer of the identity element. For this case, we have $\hat{T} = \tilde{T}$.  Further, $\dim(T)=11$, and $T = \tilde{T}$. It is not hard to show that $\dim(T_0) = 11$ by counting the non-zero intersection numbers of the association scheme. Consequently, the group $\sym{3}$ is triply transitive. Moreover, by Lemma~\ref{lem:decomposition-centralizer}, we know that the Wedderburn decomposition is determined by the permutation character of $H_{1} = \sym{3} \times C_2$ in its action on $\sym{3}$. The permutation character is given by
	\begin{align*}
		\rho_{K_{1}} = 3\chi^{[3]^+}+\chi^{[2,1]^+}+ \chi^{[1^3]^-}.
	\end{align*}
	Therefore, the Wedderburn decomposition is given by
	\begin{align*}
			T = \operatorname{M}_{3}(\mathbb{C})\oplus \mathbb{C} \oplus \mathbb{C}.
		\end{align*}
	The table containing the dimensions of $E_\mu^* T E_\lambda^*$, for $\mu,\lambda\vdash 3$, is given below.
	
    	\begin{table}[H]
			\centering
			\begin{tabular}{c||ccc}
			$T = \tilde{T}$ & $\left[1^3\right]$ & $\left[2, 1\right]$ & $\left[3\right]$ \\ \hline \hline
			$\left[1^3\right]$ & $1$ & $1$ & $1$ \\
			$\left[2, 1\right]$ & $1$ & $2$ & $1$ \\
			$\left[3\right]$ & $1$ & $1$ & $2$ \\
			\end{tabular}
			\caption{Block dimension decomposition of the Terwilliger algebra $T$ for $\sym{3}$.}
		\end{table}

	\subsection{The case when $n=4$}
	Now consider the symmetric group $\sym{4}$ and let $\mathfrak{X}_4$ be its conjugacy class scheme. Define $H = \Aut{\mathfrak{X}_4}$, and recall that $H_{1} = \sym{4} \times C_2$. Set $\tilde{T} = \End{H_{1}}{V}$, where $V= \mathbb{C}\sym{4}$ is the standard module. The character table of $\sym{4}$ is
	\begin{table}[H]
		\centering
		\begin{tabular}{c||ccccc}
			$\sym{4}$& $\left[1^4\right]$ & $\left[2, 1^2\right]$ & $\left[2^2\right]$ & $\left[3, 1\right]$ & $\left[4\right]$ \\ \hline\hline
			$\left[4\right]$ & $1$ & $1$ & $1$ & $1$ & $1$ \\
			$\left[3, 1\right]$ & $3$ & $1$ & $-1$ & $0$ & $-1$ \\
			$\left[2^2\right]$ & $2$ & $0$ & $2$ & $-1$ & $0$ \\
			$\left[2, 1^2\right]$ & $3$ & $-1$ & $-1$ & $0$ & $1$ \\
			$\left[1^4\right]$ & $1$ & $-1$ & $1$ & $1$ & $-1$ \\
		\end{tabular}
		\caption{Character table of $\sym{4}$.}
	\end{table}
	
	The group $\sym{4}$ is not triply regular, however, in the following theorem, we will see that $T = \tilde{T}$ still holds.

	\begin{theorem} 
		The following statements hold for the Terwilliger algebra of the symmetric group $\sym{4}$.
		\begin{enumerate}[(1)]
			\item $\dim(T_0) = 42$, $\dim (T) = 43$, $T=\tilde{T} = \hat{T}$.
			\item $T = \operatorname{M}_5(\mathbb{C}) \oplus\operatorname{M}_2(\mathbb{C})
			\oplus \operatorname{M}_3(\mathbb{C}) \oplus \operatorname{M}_2(\mathbb{C}) \oplus  \mathbb{C} .$
			\item $T$ has five irreducible modules of dimensions $5$ (primary), $2$, $3$, $2$, and $1$, respectively. Every irreducible $T$-module is thin.\label{thm:sym4-iii}
		\end{enumerate}\label{thm:sym4}
	\end{theorem}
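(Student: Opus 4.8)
The plan is to sandwich the Terwilliger algebra $T$ between the switching subspace $T_0$ and the two centralizer algebras $\tilde T = \End{H_1}{V}$ and $\hat T = \End{\sym{4}}{V}$, exploiting the chain of inclusions $T_0 \subseteq T \subseteq \tilde T \subseteq \hat T$. The first inclusion is by definition, the second is Lemma~\ref{lem:T-algebra}, and the third holds because $H_1 = \operatorname{Inn}(\sym{4}) \times \langle \varphi \rangle \supseteq \operatorname{Inn}(\sym{4})$, so the larger group yields the smaller centralizer. First I would pin down the top of the chain: by Lemma~\ref{lem:row-sum}, $\hat T = \bigoplus_\chi \operatorname{M}_{d_\chi}(\mathbb{C})$, where $d_\chi$ is the row sum of the row of the character table indexed by $\chi$. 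Reading the row sums off the displayed character table of $\sym{4}$ gives $5,2,3,2,1$, so $\dim \hat T = 25+4+9+4+1 = 43$ and $\hat T = \operatorname{M}_5(\mathbb{C}) \oplus \operatorname{M}_2(\mathbb{C}) \oplus \operatorname{M}_3(\mathbb{C}) \oplus \operatorname{M}_2(\mathbb{C}) \oplus \mathbb{C}$, which is exactly the decomposition claimed in part (2).

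Next I would compute the bottom of the chain. By Proposition~\ref{prop:bound}, $\dim T_0$ equals the number of triples $(i,j,k)$ with $p_{ij}^k \neq 0$; counting the nonzero intersection numbers of the five-class scheme $\mathfrak{X}_4$ (a finite mechanical check, carried out in \verb*|GAP|) yields $\dim T_0 = 42$. The inclusions now give $42 = \dim T_0 \le \dim T \le \dim \hat T = 43$, so the only open point in part (1) is whether $\dim T$ is $42$ or $43$, i.e.\ whether $\sym{4}$ is triply regular. The crux is to establish the strict inclusion $T_0 \subsetneq T$. I would do this by forming the span $T_1$ of the length-$2$ switching products $E_i^* A_j E_k^* A_l E_m^*$ and exhibiting one such product linearly independent from the $42$-dimensional space $T_0$, equivalently checking $\dim T_1 = 43$. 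Since $T_1 \subseteq T \subseteq \hat T$ with $\dim \hat T = 43$, this forces $\dim T = 43$, and the numerical squeeze collapses the entire chain: $T = T_1 = \tilde T = \hat T$, giving parts (1) and (2).

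For part (3) I would invoke the double-centralizer description of $T = \hat T = \End{\operatorname{Inn}(\sym{4})}{V}$. Its irreducible modules are in bijection with the simple Wedderburn components, and the module attached to $\chi$ has vector-space dimension equal to the multiplicity $m_\chi = d_\chi$, giving module dimensions $5,2,3,2,1$; the dimension-$5$ module is the primary one, being generated by the base vertex and meeting every subconstituent $E_\mu^* V$. Writing $V = \bigoplus_\chi W_\chi \otimes U_\chi$ as a $T \otimes \mathbb{C}H_1$-module, one gets $\dim E_\mu^* W_\chi = \langle \pi_\mu, \chi \rangle$, the multiplicity of $\chi$ in the permutation character $\pi_\mu$ of $H_1$ acting (by conjugation and inversion) on the single conjugacy class $C_\mu$. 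Thinness, the condition $\dim E_\mu^* W \le 1$ for all $\mu \vdash 4$ and all irreducible $W$, is therefore equivalent to each of these five conjugacy-class actions of $H_1$ being multiplicity-free, which I would verify from the block-dimension decomposition of $T$ (checking $\dim E_\mu^* T E_\mu^* = \sum_\chi (\dim E_\mu^* W_\chi)^2$ against the number of irreducible constituents of $E_\mu^* V$) or equivalently from the norms $\langle \pi_\mu, \pi_\mu \rangle$.

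The main obstacle is the third step of the first argument, the strict inclusion $T_0 \subsetneq T$. The gap $\dim \hat T - \dim T_0$ is only $1$, so everything hinges on producing a single genuinely longer switching product that escapes $T_0$; this is precisely the phenomenon that separates $\sym{4}$ from the triply regular case of $\sym{3}$. Once that one extra dimension is located, the tight numerical sandwich does all the remaining work and forces the three algebras to coincide, after which parts (2) and (3) are read off from the row-sum decomposition and a multiplicity-freeness check.
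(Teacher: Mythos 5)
Your proposal is correct and follows essentially the same route as the paper: compute $\dim T_0 = 42$ from the nonzero intersection numbers, compute $\dim\hat{T} = 43$ from the character-table row sums, witness $T_0 \subsetneq T$ by a switching product of length $2$ escaping $T_0$ (the paper phrases this as the block $E_{[3,1]}^*T_0E_{[3,1]}^*$ failing to be closed under multiplication), and let the one-dimensional gap force $T = \tilde{T} = \hat{T}$, after which parts (2) and (3) are read off from the permutation character. Your part (3) via $\dim E_\mu^* W_\chi = \langle \pi_\mu,\chi\rangle$ is a mild character-theoretic refinement of the paper's count of nonvanishing projections $E_\mu^* e_{\chi} E_\mu^*$, but it is the same underlying argument.
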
 

	\begin{proof}
		(1) By counting the number of non-zero intersection numbers, we deduce that $\dim(T_0) = 42$. However, $E_{[3,1]}^*T_0E_{[3,1]}^*$ is not closed under matrix multiplication, and thus not a subalgebra. Consequently, $T_0 \subsetneq T$.
		
		By Lemma~\ref{lem:row-sum}, the dimensions of the Wedderburn components of the centralizer algebra $\hat{T}$, of $\sym{4}$ acting by conjugation, are the row sums of the character table of $\sym{4}$. Hence, they are $5, 2, 3,2, \mbox{ and } 1$. We deduce that $\dim(\hat{T}) = 5^2+2^2+3^2+2^2+1^2 = 43$. Since $T\subset \hat{T}$ and $T_0 \subsetneq T$, it follows from the fact that $\dim(\hat{T})-\dim(T_0) = 1$ that $T= \tilde{T}= \hat{T}$.
		
		(2)	Similar to the case of $\sym{3}$, since $T = \tilde{T}$, we can easily determine the Wedderburn decomposition of $T$ using Lemma~\ref{lem:decomposition-centralizer}. Again, let $\rho_{H_{1}}$ be the permutation character of the permutation group $H_{1}$. Then, $\rho_{H_1}$ decomposes as 
		\begin{align*}
			\rho_{H_{1}} = 5 \chi^{[4]^+} + 2 \chi^{[3,1]^+}+ 3 \chi^{[2^2]^+} + 2 \chi^{[2,1^2]^-} + \chi^{[1^4]^-} ,
		\end{align*}
		so the irreducible characters of $\tilde{T}$ have degree $5, 2, 3, 2, 1$.  Therefore, $T = \tilde{T}$ and we obtain a formula for the centrally primitive idempotents of $T$ using the one we have automatically for $\tilde{T}$. Consequently, we have
		\begin{align*}
			T = \operatorname{M}_5(\mathbb{C}) \oplus\operatorname{M}_2(\mathbb{C})
			\oplus \operatorname{M}_3(\mathbb{C}) \oplus \operatorname{M}_2(\mathbb{C}) \oplus  \mathbb{C} .
		\end{align*}
		
		(3) Next, we show that every irreducible $T$-module is thin. Let $W_{\lambda^\pm}$ be an irreducible $\tilde{T}$-module with corresponding centrally primitive idempotent $e_{\chi^{\lambda^\pm}}$. We note that 
		\begin{align}
			E_\mu^* e_{\chi^{\lambda^\pm}} E_\mu^*\neq 0 \Longleftrightarrow E_{\mu}^* W_{\lambda^\pm} \neq (0)\label{eq:projection}
		\end{align}
		for any $\mu \vdash 4$. Moreover, we have
		\begin{align}
			\dim W_{\lambda^\pm} = \dim \left(\sum_{\mu \vdash 4} E_\mu^* \right)W_{\lambda^\pm} = \sum_{\mu \vdash 4}\dim \left( E_\mu^*W_{\lambda^\pm} \right).\label{eq:thin}
		\end{align}
		For the primary module $W_{[4]^+}$ (of dimension $5$), $E_\mu^* e_{\chi^{[4]^+}} E_\mu^* \neq 0$ for all $\mu \vdash 4$. We conclude that $\dim E_{\mu}^*W_{[4]^+} = 1$, for all $\mu \vdash 4$, which means that $W_{[4]^+}$ is thin. For the irreducible $\tilde{T}$-module $W_{[3,1]^+}$, there are exactly two partitions $\mu$ and $\mu^\prime$ of $4$, such that $E_\mu^* e_{\chi^{[3,1]^+}} E_\mu^* \neq 0$ and $E_{\mu^\prime}^* e_{\chi^{[3,1]^+}} E_{\mu^\prime}^* \neq 0$. These force $W_{[3,1]^+}$ to be thin. We argue in the same way for $W_{[2,1^2]^-}$. For $W_{[2^2]^+}$, we compute the elements of the form $E_\mu^* e_{\chi^{[2^2]^+}} E_\mu^*$, for $\mu \vdash 4$. It turns out that there are exactly three of these elements that are not equal to $0$. Since $\dim W_{[2^2]^+} = 3$, we conclude that $W_{[2^2]^+}$ is thin. The irreducible $\tilde{T}$-module $W_{[1^4]^-}$ is clearly thin since its dimension is $1$. 
		 This completes the proof.		
	\end{proof}

	In the next table, we give the dimensions of the subspaces $E_\lambda^* \tilde{T} E_\mu^*$, for $\lambda,\mu \vdash 4$. 
	\begin{table}[H]
		\centering
		\begin{tabular}{c||ccccc}
			$\tilde{T}$ & $\left[1^4\right]$ & $\left[2, 1^2\right]$ & $\left[2^2\right]$ & $\left[3, 1\right]$ & $\left[4\right]$ \\ \hline \hline
			$\left[1^4\right]$ & $1$ & $1$ & $1$ & $1$ & $1$ \\
			$\left[2, 1^2\right]$ & $1$ & $3$ & $2$ & $2$ & $2$ \\
			$\left[2^2\right]$ & $1$ & $2$ & $2$ & $1$ & $2$ \\
			$\left[3, 1\right]$ & $1$ & $2$ & $1$ & $4$ & $2$ \\
			$\left[4\right]$ & $1$ & $2$ & $2$ & $2$ & $3$ \\
		\end{tabular}
		\caption{Block dimension decomposition of the centralizer algebra $\tilde{T}$ for $\sym{4}$.}
	\end{table}

	\subsection{The case when $n = 5$}
	Consider the conjugacy class scheme $\mathfrak{X}_5$ of $\sym{5}$. Let $H=\Aut{\mathfrak{X}_5}$, and recall that the stabilizer of the identity permutation is $H_{1} = \sym{5} \times C_2$. Again, let $T$ be the Terwilliger algebra of this scheme, and $\tilde{T} = \End{H_{1}}{V}$, where $V$ is the standard module.

	\begin{theorem}
		The following statements hold for the Terwilliger algebra of the symmetric group $\sym{5}$.
		\begin{enumerate}[(1)]
			\item $\dim(T_0) = 124$, $\dim(T_1) = \dim(T) = \dim(\tilde{T}) = 155.$
			\item $T = \operatorname{M}_7(\mathbb{C})\oplus \operatorname{M}_{5}(\mathbb{C}) \oplus \operatorname{M}_{6}(\mathbb{C})\oplus \operatorname{M}_{5}(\mathbb{C}) \oplus \operatorname{M}_{3}(\mathbb{C}) 
			\oplus \mathbb{C} \oplus \operatorname{M}_{3}(\mathbb{C}) \oplus \mathbb{C} .$
			\item  $T$ has eight irreducible modules, 
			of dimensions $7$, $5$, $6$, $5$, $3$, $1$, $3$, and $1$, respectively.  All but one (of dimension $5$) are thin.    
		\end{enumerate}  
		\label{prop:dim-5}
	\end{theorem}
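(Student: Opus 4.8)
The plan is to follow the template established for $\sym{3}$ and $\sym{4}$, exploiting the sandwich $T_0 \subseteq T_1 \subseteq T \subseteq \tilde{T}$ together with a matching of dimensions at the two ends. First I would establish part (1). Using Proposition~\ref{prop:bound}, the dimension of $T_0$ equals the number of triples $(i,j,k)$ with $p_{ij}^k \ne 0$; a direct count over the seven conjugacy classes of $\sym{5}$ (indexed by the partitions of $5$) gives $\dim(T_0) = 124$. As in the $\sym{4}$ case I would then exhibit a single diagonal block, say $E_\mu^* T_0 E_\mu^*$ for a suitable $\mu \vdash 5$, that fails to be closed under multiplication, which shows $T_0 \subsetneq T$ and hence that $\sym{5}$ is not triply regular. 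Next I would compute $\dim(\tilde{T}) = \dim \End{H_1}{V}$ either by counting the orbitals of $H_1 = \operatorname{Inn}(\sym{5}) \times \langle \varphi \rangle$ (Lemma~\ref{lem:number-of-orbitals}) or by decomposing its permutation character (Lemma~\ref{lem:decomposition-centralizer}); this yields $155$. It is worth contrasting this with the conjugation centralizer $\hat{T} = \End{\sym{5}}{V}$, whose block sizes are the row sums $7,5,6,5,4,3,1$ of the character table (Lemma~\ref{lem:row-sum}), so that $\dim(\hat{T}) = 161 \ne 155$; thus, unlike for $\sym{4}$, here $T \ne \hat{T}$. The crux of part (1) is to show $\dim(T_1) = 155$: once this is verified, the inclusions $T_1 \subseteq T \subseteq \tilde{T}$ and the equality $\dim(T_1) = \dim(\tilde{T})$ force $T_1 = T = \tilde{T}$, and since $\dim(T_0) \ne \dim(T_1)$ the switching width is exactly $1$ by Lemma~\ref{lem:max-ribbon-width}.

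With $T = \tilde{T}$ in hand, part (2) is immediate from Lemma~\ref{lem:decomposition-centralizer}: I would decompose the permutation character $\rho_{H_1}$ of $H_1 \cong \sym{5} \times C_2$ into the irreducible characters $\chi^{\lambda^\pm}$. Each conjugation-isotypic component of $\hat{T}$ is stabilized by $\varphi$ (inversion commutes with conjugation), so $\varphi$ splits it into its $+1$- and $-1$-eigenspaces, and the resulting multiplicities are precisely the Wedderburn block sizes of $\tilde{T}$. Concretely I expect the $\operatorname{M}_4(\mathbb{C})$ block of $\hat{T}$ arising from the partition $[2^2,1]$ to split as $3 + 1$, while the remaining row sums $7,5,6,5,3,1$ persist, producing the eight blocks $\operatorname{M}_7(\mathbb{C}) \oplus \operatorname{M}_5(\mathbb{C}) \oplus \operatorname{M}_6(\mathbb{C}) \oplus \operatorname{M}_5(\mathbb{C}) \oplus \operatorname{M}_3(\mathbb{C}) \oplus \mathbb{C} \oplus \operatorname{M}_3(\mathbb{C}) \oplus \mathbb{C}$, and hence $\dim(T) = 155$ as a consistency check.

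For part (3) I would reuse the characterization in \eqref{eq:thin}: for each centrally primitive idempotent $e_{\chi^{\lambda^\pm}}$ of $\tilde{T} = T$, the associated irreducible module $W_{\lambda^\pm}$ satisfies $\dim W_{\lambda^\pm} = \sum_{\mu \vdash 5} \dim(E_\mu^* W_{\lambda^\pm})$, and by \eqref{eq:projection} the nonzero summands are detected by $E_\mu^* e_{\chi^{\lambda^\pm}} E_\mu^* \ne 0$. Counting the partitions $\mu$ for which this holds and comparing with $\dim W_{\lambda^\pm}$ decides thinness: when the count equals the dimension, every $E_\mu^* W_{\lambda^\pm}$ is one-dimensional and the module is thin. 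For seven of the eight modules I expect the counts to match, while for exactly one module of dimension $5$ the count will fall short (forcing some $\dim(E_\mu^* W_{\lambda^\pm}) = 2$), which identifies the unique non-thin module; determining which of the two dimension-$5$ modules this is can be read off from the block-dimension diagonal.

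The main obstacle is computational rather than conceptual: the two hard inputs are the exact value $\dim(T_1) = 155$, which requires evaluating and row-reducing the span of all nonzero length-$2$ switching products $E_i^* A_j E_k^* A_l E_m^*$ over the $7^5$ index tuples, and the rank computations $\dim(E_\mu^* W_{\lambda^\pm})$ needed to pin down thinness. Both are carried out with the assistance of computer algebra. Everything else — the intersection-number count, the character-theoretic decomposition of $\rho_{H_1}$, and the sandwich argument — is routine once these numbers are available.
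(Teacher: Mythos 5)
Your proposal is correct and follows essentially the same route as the paper: count nonzero intersection numbers to get $\dim(T_0)=124$, exhibit a diagonal block (the paper uses $E_{[4,1]}^*T_0E_{[4,1]}^*$) that is not closed under multiplication, compute $\dim(T_1)=155$ by machine, match it against $\dim(\tilde{T})=155$ from the permutation character of $H_1$ to force $T_1=T=\tilde{T}$, and then read off the Wedderburn decomposition and thinness from the multiplicities and the projections $E_\mu^*e_{\chi^{\lambda^\pm}}E_\mu^*$ (the paper identifies $W_{[3,1^2]^-}$ as the unique non-thin module by exactly the counting argument you describe). Your side remark contrasting with $\hat{T}=\End{\sym{5}}{V}$ of dimension $161$ and the $4=3+1$ splitting of the $[2^2,1]$ block is consistent with the paper's permutation-character decomposition, though the paper does not phrase it via $\varphi$-eigenspaces.
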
 

		\begin{proof}
			(1) By counting the number of non-zero intersection numbers, we know that $\dim(T_0) = 124$. The subspace $E_{[4,1]}^*TE_{[4,1]}^*$ is not closed under multiplication, so it is as in the case of $n=4$, not a subalgebra. Consequently, $T_0 \subsetneq T$. Next, we compute the block dimension decomposition of $T_1$, which is given in the next table. We highlight the increase in dimension in each block by adding to $\dim(E_\lambda^* T_0 E_\mu^*)$ the number $(\dim(E_\lambda^* T_1 E_\mu^*) -\dim(E_\lambda^* T_0 E_\mu^*))$, for $\lambda,\mu  \vdash 5$.
			
			\begin{table}[H]
				\begin{tabular}{c||ccccccc}
					$T_1$& $\left[1^5\right]$ & $\left[2, 1^3\right]$ & $\left[2^2, 1\right]$ & $\left[3, 1^2\right]$ & $\left[3, 2\right]$ & $\left[4, 1\right]$ & $\left[5\right]$ \\ \hline\hline
					$\left[1^5\right]$ & $1$ & $1$ & $1$ & $1$ & $1$ & $1$ & $1$ \\
					$\left[2, 1^3\right]$ & $1$ & $3$ & $3$ & $3$ & $3$ & $3$ & $2$ \\
					$\left[2^2, 1\right]$ & $1$ & $3$ & $4$ & $3$ & $3$ & $3+1$ & $3$ \\
					$\left[3, 1^2\right]$ & $1$ & $3$ & $3$ & $4+1$ & $3+2$ & $3+2$ & $3+1$ \\
					$\left[3, 2\right]$ & $1$ & $3$ & $3$ & $3+2$ & $4+1$ & $3+2$ & $3+1$ \\
					$\left[4, 1\right]$ & $1$ & $3$ & $3+1$ & $3+2$ & $3+2$ & $4+3$ & $3+2$ \\
					$\left[5\right]$ & $1$ & $2$ & $3$ & $3+1$ & $3+1$ & $3+2$ & $4+4$ \\
				\end{tabular}
				\caption{Block dimension decomposition of the subspace $T_1$ for $\sym{5}$.}
			\end{table}
			In particular, $\dim(T_1) = 155$. 
			
			Next, we determine the dimension of $\tilde{T}$. We only need to determine the permutation character of $H_{1}$ to determine the dimension of the Wedderburn components.	The permutation character of $H_{1}$ is given by 
			\begin{align}
				\rho_{H_{1}} = 7 \chi^{[5]^+} + 5 \chi^{[4,1]^+} + 6 \chi^{[3,2]^+}+ 5 \chi^{[3,1^2]^-}+ 3 \chi^{[2^2,1]^+} + \chi^{[2^2,1]^-} + 3 \chi^{[2,1^3]^-}+ \chi^{[1^5]^-} .\label{eq:perm-char-s5}
			\end{align}
			By Lemma~\ref{lem:decomposition-centralizer}, we have
			\begin{align*}
				\dim(\tilde{T}) = 7^2 + 5^2 + 6^2 + 5^2 + 3^2 +3^2  + 1+ 1 = 155.
			\end{align*}
			Therefore, $T_1 = \tilde{T}$, and so the Terwilliger algebra $T$ must be equal to $T_1 = \tilde{T}$.
			
			(2) Knowing that $T = \tilde{T}$, using Lemma~\ref{lem:decomposition-centralizer} and \eqref{eq:perm-char-s5}, the Wedderburn decomposition of $T$ is given by
			\begin{align*}
				T = \operatorname{M}_7(\mathbb{C})\oplus \operatorname{M}_{5}(\mathbb{C}) \oplus \operatorname{M}_{6}(\mathbb{C})\oplus \operatorname{M}_{5}(\mathbb{C}) \oplus \operatorname{M}_{3}(\mathbb{C}) 
				\oplus \mathbb{C} \oplus \operatorname{M}_{3}(\mathbb{C}) \oplus \mathbb{C} .
			\end{align*}
			
			(3) Since $T = \tilde{T}$, we will consider the irreducible $\tilde{T}$-modules. Consider the irreducible $\tilde{T}$-module $W_{[3,1^2]^-}$. Computing the elements of the form $E_\mu^* e_{\chi^{[3,1^2]^-}} E_\mu^*$ for $\mu \vdash 5$, there are at least three that are equal to $0$. Using the analogue of equivalence in \eqref{eq:projection} and \eqref{eq:thin} for $\sym{5}$, and the fact that $\dim(W_{[3,1^2]^-}) = 5$, we conclude that $W_{[3,1^2]^-}$ is not thin. We use the same argument as Theorem~\ref{thm:sym4}\eqref{thm:sym4-iii} to show that the remaining irreducible $T$-modules are thin.
		\end{proof}
		
	\subsection{The case when $n = 6$}
	Now, we consider the group $\sym{6}$ with the conjugacy class scheme $\mathfrak{X}_6$ , and let $H=\Aut{\mathfrak{X}_6}$ be its automorphism group. As before, we let $T$ be the Terwilliger algebra of $\sym{6}$ and $\tilde{T} = \End{H_{1}}{V}$, where $V$ is the standard module. 
	
	\begin{theorem}
		The following statements hold for the Terwilliger algebra of the symmetric group $\sym{6}$.
		\begin{enumerate}[(1)]
			\item $\dim(T_0) = 447$, $\dim(T_1) = \dim(T) = 758$ and $ \dim(\tilde{T}) = 761.$
			\item The Wedderburn decomposition of $T$ is 
			\begin{align*}
				\begin{split}
					T &= \operatorname{M}_{11}(\mathbb{C}) \oplus \operatorname{M}_{8}(\mathbb{C})\oplus \operatorname{M}_{15}(\mathbb{C})\oplus \operatorname{M}_{4}(\mathbb{C})\oplus \operatorname{M}_{9}(\mathbb{C})\oplus \operatorname{M}_{3}(\mathbb{C})\oplus \operatorname{M}_{7}(\mathbb{C})\\
					&\hspace{0.5cm}\oplus \operatorname{M}_{6}(\mathbb{C})\oplus \operatorname{M}_{9}(\mathbb{C})\oplus \operatorname{M}_{8}(\mathbb{C})\oplus \operatorname{M}_{3}(\mathbb{C})\oplus \mathbb{C}\oplus \mathbb{C}\oplus \mathbb{C}.
				\end{split}
			\end{align*}
			\item  The irreducible $T$-modules have dimensions $11$ (primary), $15$, $9$, $9$, $8$, $8$, $7$, $6$, $4$, $3$, $3$, $1$, $1$, and $1$.  The primary module along with the six irreducible $T$-modules of smallest dimensions are thin, and the seven of largest dimension are not thin. 
		\end{enumerate} 
				\label{thm:sym-5}
			\end{theorem}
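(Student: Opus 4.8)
The plan is to follow the same computational template used for $\sym{n}$ with $n \le 5$, but with the essential new ingredient that here $T \subsetneq \tilde{T}$, so the Wedderburn decomposition can no longer be read directly off the permutation character. As in the earlier cases, the heavy lifting (counting intersection numbers, forming switching products, computing characters and orbitals) is carried out with the computer algebra packages, and the role of the argument is to organize and certify the output.

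For part (1), I would first count the triples $(i,j,k)$ with $p_{ij}^k \neq 0$; by Proposition~\ref{prop:bound} this gives $\dim(T_0) = 447$. Next, I would compute the block dimension decomposition of $T_1$, namely for each pair $\lambda,\mu \vdash 6$ the dimension of $E_\lambda^* T_1 E_\mu^*$ spanned by the switching products of length $2$, and sum the entries to obtain $\dim(T_1) = 758$. The decisive point is to verify that the switching width is exactly $1$, i.e. that $T_1 = T_2$; once this is checked, Lemma~\ref{lem:max-ribbon-width} yields $T = T_1$ and hence $\dim(T) = 758$. Finally, I would decompose the permutation character $\rho_{H_1}$ of $H_1 = \operatorname{Inn}(\sym{6}) \times \langle \varphi \rangle \cong \sym{6}\times C_2$ into irreducibles and apply Lemma~\ref{lem:decomposition-centralizer}; summing the squares of the multiplicities gives $\dim(\tilde{T}) = 761$, so $T \subsetneq \tilde{T}$.

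For part (2), the obstacle is precisely that $T \neq \tilde{T}$. My strategy is to transfer as much of the known Wedderburn decomposition of $\tilde{T}$ to $T$ as possible using Lemma~\ref{lem:cpi} and Corollary~\ref{cor:large-dim}. Since $\dim(\tilde{T}) - \dim(T) = 3 < 2m_\chi - 1$ whenever $m_\chi \ge 3$, the Corollary forces $Te_\chi = \tilde{T}e_\chi$ for every centrally primitive idempotent $e_\chi$ of $\tilde{T}$ of multiplicity at least $3$; conversely, by part~(iii) of Lemma~\ref{lem:cpi} any simple component of $T$ not meeting $Z(\tilde T)$ embeds into a block of $\tilde T$ of multiplicity $\le 2$ and so has size $\le 2$. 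Hence the eleven matrix blocks of $T$ of size $\ge 3$, namely $\operatorname{M}_{15}, \operatorname{M}_{11}, \operatorname{M}_9, \operatorname{M}_9, \operatorname{M}_8, \operatorname{M}_8, \operatorname{M}_7, \operatorname{M}_6, \operatorname{M}_4, \operatorname{M}_3, \operatorname{M}_3$, match identically in $T$ and $\tilde T$ and contribute dimension $755$ to both. The remaining blocks of $\tilde{T}$, all of multiplicity $\le 2$, contribute dimension $761 - 755 = 6$, whereas the corresponding part of $T$ has dimension only $758 - 755 = 3$; computing these low-multiplicity idempotents explicitly (equivalently, computing $Z(T)$ directly) shows how they collapse inside $T$ to the three $\mathbb{C}$ summands, pinning down the decomposition in~(2).

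For part (3), I would decide thinness block by block. For the irreducible $T$-module $W_\chi$ with centrally primitive idempotent $e_\chi$, the analogues of \eqref{eq:projection}--\eqref{eq:thin} show that $W_\chi$ is thin if and only if the number of partitions $\mu \vdash 6$ with $E_\mu^* e_\chi E_\mu^* \neq 0$ equals $\dim W_\chi$. Since there are only $11$ partitions of $6$, the $15$-dimensional module is automatically non-thin; for the primary module of dimension $11$ one checks that all eleven diagonal projections are nonzero, so it is thin. The remaining cases reduce to computing, for each $e_\chi$, the count of nonvanishing $E_\mu^* e_\chi E_\mu^*$ and comparing it with $\dim W_\chi$, confirming that exactly the primary module together with the six modules of smallest dimension are thin, while the seven of largest dimension are not. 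The main obstacle throughout is the failure $T \subsetneq \tilde{T}$, which is the genuinely new phenomenon for $n \ge 6$: it is detected only through the computation that $T = T_1$ has dimension strictly below $\dim(\tilde T)=761$, and it is what makes part~(2) require the centrally-primitive-idempotent comparison of Lemma~\ref{lem:cpi} in place of a direct reading of the permutation character. Consequently the computationally delicate steps are verifying that the switching width equals $1$ and extracting the exact low-dimensional blocks of $T$, while the rest is bookkeeping with characters and orbitals.
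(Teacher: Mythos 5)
Your proposal is correct and follows essentially the same route as the paper: computing $\dim(T_0)=447$ from the nonzero intersection numbers, verifying via the block dimension decomposition that $T_1$ is already closed under switching products (so $T=T_1$ with $\dim T=758$), reading $\dim\tilde{T}=761$ off the permutation character of $H_1$, transferring all blocks of multiplicity $\ge 3$ via Corollary~\ref{cor:large-dim}, collapsing the six multiplicity-one constituents into three $\mathbb{C}$ summands via Lemma~\ref{lem:cpi}, and deciding thinness by counting the nonvanishing projections $E_\mu^* e_\chi E_\mu^*$. The only cosmetic difference is that you phrase the closure check as $T_1=T_2$ while the paper checks closure of $T_1$ under multiplication by $T_0$, which is the same verification.
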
 
		\begin{proof}
			(1) Similar to the argument used for $\sym{5}$, we start by counting the number of non-zero intersection numbers to compute the dimension of $T_0$. We have $\dim(T_0) = 447$. The subspace $E^*_{[5,1]}T_0E_{[5,1]}^*$ is not closed under multiplication (see Table~\ref{tab:t-6}), so we have $T_0 \subsetneq T$. Next, we determine the block dimension decomposition\footnote{ Again, we highlight the growth in dimension in each block by adding to $\dim(E_\lambda^* T_0 E_\mu^*)$ the number $(\dim(E_\lambda^* T_1 E_\mu^*) -\dim(E_\lambda^* T_0 E_\mu^*))$, for $\lambda,\mu  \vdash 6$.} of $T_1$, which is given in Table~\ref{tab:t-6}. 
			\begin{table}[H]
				
				\begin{tabular}{c||ccccccccccc}
					$T_1$& $\left[1^6\right]$ & $\left[2, 1^4\right]$ & $\left[2^2, 1^2\right]$ & $\left[2^3\right]$ & $\left[3, 1^3\right]$ & $\left[3, 2, 1\right]$ & $\left[3^2\right]$ & $\left[4, 1^2\right]$ & $\left[4, 2\right]$ & $\left[5, 1\right]$ & $\left[6\right]$ \\ \hline \hline
					$\left[1^6\right]$ & $1$ & $1$ & $1$ & $1$ & $1$ & $1$ & $1$ & $1$ & $1$ & $1$ & $1$ \\
					$\left[2, 1^4\right]$ & $1$ & $3$ & $4$ & $2$ & $3$ & $5$ & $2$ & $4$ & $4$ & $3$ & $3$ \\
					$\left[2^2, 1^2\right]$ & $1$ & $4$ & $6+2$ & $4$ & $4$ & $4+4$ & $4$ & $5+3$ & $5+3$ & $4+3$ & $4+4$ \\
					$\left[2^3\right]$ & $1$ & $2$ & $4$ & $3$ & $2$ & $3$ & $3$ & $4$ & $4$ & $3$ & $5$ \\
					$\left[3, 1^3\right]$ & $1$ & $3$ & $4$ & $2$ & $5+1$ & $5+4$ & $4$ & $4+2$ & $4+2$ & $5+3$ & $4+2$ \\
					$\left[3, 2, 1\right]$ & $1$ & $5$ & $4+4$ & $3$ & $5+4$ & $6+13$ & $4+2$ & $5+8$ & $5+8$ & $5+11$ & $5+7$ \\
					$\left[3^2\right]$ & $1$ & $2$ & $4$ & $3$ & $4$ & $4+2$ & $5+1$ & $4+2$ & $4+2$ & $5+3$ & $5+4$ \\
					$\left[4, 1^2\right]$ & $1$ & $4$ & $5+3$ & $4$ & $4+2$ & $5+8$ & $4+2$ & $6+6$ & $5+7$ & $5+8$ & $5+8$ \\
					$\left[4, 2\right]$ & $1$ & $4$ & $5+3$ & $4$ & $4+2$ & $5+8$ & $4+2$ & $5+7$ & $6+6$ & $5+8$ & $5+8$ \\
					$\left[5, 1\right]$ & $1$ & $3$ & $5+2$ & $3$ & $5+3$ & $5+11$ & $5+3$ & $5+8$ & $5+8$ & $6+17$ & $5+11$ \\
					
					$\left[6\right]$ & $1$ & $3$ & $4+4$ & $5$ & $4+2$ & $5+7$ & $5+4$ & $5+8$ & $5+8$ & $5+11$ & $6+13$ \\
				\end{tabular}
				\caption{Block dimension decomposition for the Terwilliger algebra $T$ of $\sym{6}$.}\label{tab:t-6}		
			\end{table} 
			
Since $T_1$ is closed under left and right multiplication by elements of $T_0$, it is a subalgebra of $T$, which implies that $T_1 = T$. By summing the dimensions given in Table~\ref{tab:t-6}, we have $\dim(T) = 758.$
			
			For $\tilde{T}$, as we have seen before, the dimension is equal to the number of orbitals of the action of the point-stabilizer $H_1$. The block decomposition of $\tilde{T}$ is given in Table~\ref{tab:tilde-6}.
			\begin{table}[H]
				
				\begin{tabular}{c||ccccccccccc}
					$\tilde{T}$ & $\left[1^6\right]$ & $\left[2, 1^4\right]$ & $\left[2^2, 1^2\right]$ & $\left[2^3\right]$ & $\left[3, 1^3\right]$ & $\left[3, 2, 1\right]$ & $\left[3^2\right]$ & $\left[4, 1^2\right]$ & $\left[4, 2\right]$ & $\left[5, 1\right]$ & $\left[6\right]$ \\ \hline \hline
					$\left[1^6\right]$ & $1$ & $1$ & $1$ & $1$ & $1$ & $1$ & $1$ & $1$ & $1$ & $1$ & $1$ \\
					$\left[2, 1^4\right]$ & $1$ & $3$ & $4$ & $2$ & $3$ & $5$ & $2$ & $4$ & $4$ & $3$ & $3$ \\
					$\left[2^2, 1^2\right]$ & $1$ & $4$ & $8$ & $4$ & $4$ & $8$ & $4$ & $8$ & $8$ & $7$ & $8$ \\
					$\left[2^3\right]$ & $1$ & $2$ & $4$ & $3$ & $2$ & $3$ & $3$ & $4$ & $4$ & $3$ & $5$ \\
					$\left[3, 1^3\right]$ & $1$ & $3$ & $4$ & $2$ & $6$ & $9$ & $4$ & $6$ & $6$ & $8$ & $6$ \\
					$\left[3, 2, 1\right]$ & $1$ & $5$ & $8$ & $3$ & $9$ & $20$ & $6$ & $13$ & $13$ & $16$ & $12$ \\
					$\left[3^2\right]$ & $1$ & $2$ & $4$ & $3$ & $4$ & $6$ & $6$ & $6$ & $6$ & $8$ & $9$ \\
					$\left[4, 1^2\right]$ & $1$ & $4$ & $8$ & $4$ & $6$ & $13$ & $6$ & $12$ & $12$ & $13$ & $13$ \\
					$\left[4, 2\right]$ & $1$ & $4$ & $8$ & $4$ & $6$ & $13$ & $6$ & $12$ & $12$ & $13$ & $13$ \\
					$\left[5, 1\right]$ & $1$ & $3$ & $7$ & $3$ & $8$ & $16$ & $8$ & $13$ & $13$ & $24$ & $16$ \\
					$\left[6\right]$ & $1$ & $3$ & $8$ & $5$ & $6$ & $12$ & $9$ & $13$ & $13$ & $16$ & $20$ \\
				\end{tabular}
				\caption{Block dimension decomposition for the centralizer algebra $\tilde{T}$ of $\sym{6}$.}\label{tab:tilde-6}
			\end{table}
			From Table~\ref{tab:tilde-6}, we deduce that $\dim(\tilde{T}) = 761.$ 
			
			(2) Next, we determine the Wedderburn decomposition of $T$. Since $T\subsetneq \tilde{T}$, we cannot use the same argument as before. We can still obtain most of the Wedderburn components from that of $\tilde{T}$ using Corollary~\ref{cor:large-dim}. The permutation character for the action of $H_1$ on $\sym{6}$ is given by
			\begin{align} 
				\begin{split}
					\rho_{H_{1}} &= 11 \chi^{[6]^+} + 8 \chi^{[5,1]^+}+ \chi^{[5,1]^-} + 15 \chi^{[4,2]^+}+ 4 \chi^{[4,2]^-}+ \chi^{[4,1^2]^+}+ 9 \chi^{[4,1^2]^-} \\
					& \hspace{0.5cm}+ 3 \chi^{[3^2]^+}+ 7 \chi^{[3,2,1]^+}  + 6 \chi^{[3,2,1]^-}     +  \chi^{[3,1^3]^+}+ 9 \chi^{[3,1^3]^-}+ 8\chi^{[2^3]^+}+ \chi^{[2^3]^-}\\
					&\hspace{0.5cm}+ \chi^{[2^2,1^2]^+} + 3 \chi^{[2,1^4]^+}    + \chi^{[1^6]^+}.
				\end{split} \label{eq:perm-char-s6}
			\end{align}
			Denote by $m_{\chi}$ the multiplicity of the character $\chi$ in \eqref{eq:perm-char-s6}. By Corollary~\ref{cor:large-dim}, since $\dim(\tilde{T}) -\dim(T) = 3$, any irreducible character $\chi$ in \eqref{eq:perm-char-s6} such that $m_\chi^2 - (m_\chi-1)^2>3$ gives rise to a Wedderburn component of $T$ of the same dimension as the one in $\tilde{T}$. Clearly, any irreducible character in \eqref{eq:perm-char-s6} with $m_\chi\geq 3$ yields such Wedderburn components. For the remaining components, we only need to consider the restrictions of the irreducible characters 
			\begin{align}
				\chi^{[5,1]^-},\chi^{[4,1^2]^+}, \chi^{[3,1^3]^+},\chi^{[2^3]^-},\chi^{[2^2,1^2]^+},\mbox{ and }\chi^{[1^6]^+}.\label{eq:six}
			\end{align}
			There are $17$ irreducible characters involved in \eqref{eq:perm-char-s6}, $11$ of which are also in $T$. As the dimension of $Z(T)$ is $14$, the $6$ characters in \eqref{eq:six} must produce the $3$ additional irreducible characters of $T$. The centrally primitive idempotents $ \tilde{e}_{[5,1]^-},\tilde{e}_{[4,1^2]^+},\tilde{e}_{[3,1^3]^+}, \tilde{e}_{[2^3]^-},\tilde{e}_{[2^2,1^2]^+} $, and $\tilde{e}_{[1^6]^+}$ of $\tilde{T}$ do not lie in $T$, so we can use Lemma~\ref{lem:cpi}\eqref{third} to determine their restrictions into $T$. The last three centrally primitive idempotents of $\tilde{T}$ arise from the pairing of the centrally primitive idempotents of $\tilde{T}$ given by  
			\begin{align}
				  \mbox{$(\tilde{e}_{[1^6]^+},\tilde{e}_{[2^2,1^2]^+}),$ $(\tilde{e}_{[2^3]^-},\tilde{e}_{[3,1^3]^+}),$ and $(\tilde{e}_{[4,1^2]^+},\tilde{e}_{[5,1]^-})$.}\label{eq:cpi-6}
			\end{align}
			The Wedderburn components obtained from pairing these centrally primitive idempotents are of dimension equal to $\dim(\tilde{T}\tilde{e}_{[1^6]^+}) = 1$, $\dim(\tilde{T}\tilde{e}_{[2^3]^-})=1$, and $\dim(\tilde{T}\tilde{e}_{[4,1^2]^+})=1$.
			Consequently, the Wedderburn decomposition is
			\begin{align*}
				\begin{split}
					T &= \operatorname{M}_{11}(\mathbb{C}) \oplus \operatorname{M}_{8}(\mathbb{C})\oplus \operatorname{M}_{15}(\mathbb{C})\oplus \operatorname{M}_{4}(\mathbb{C})\oplus \operatorname{M}_{9}(\mathbb{C})\oplus \operatorname{M}_{3}(\mathbb{C})\oplus \operatorname{M}_{7}(\mathbb{C})\\
					&\hspace{0.5cm}\oplus \operatorname{M}_{6}(\mathbb{C})\oplus \operatorname{M}_{9}(\mathbb{C})\oplus \operatorname{M}_{8}(\mathbb{C})\oplus \operatorname{M}_{3}(\mathbb{C})\oplus \mathbb{C}\oplus \mathbb{C}\oplus \mathbb{C}.
				\end{split}
			\end{align*}
			
			(3) The primary module is clearly thin. The irreducible $T$-modules arising from the centrally primitive idempotents of $T$ given in \eqref{eq:cpi-6} are all $1$-dimensional. Hence, they are all thin. The remaining irreducible $T$-modules are restrictions of irreducible $\tilde{T}$-modules. We can apply the same argument as in Theorem~\ref{thm:sym4} and Theorem~\ref{thm:sym-5} to determine which irreducible $\tilde{T}$-modules are thin and which ones are not. Hence, we omit the computation.
			
			This completes the proof.
		\end{proof}

The fact that our $H_1 \subseteq \sym{n}$ as a permutation group on $V$ implies a general fact about the pair of irreducible $\tilde{T}$-modules corresponding to $\lambda^+$ and $\lambda^-$ for a fixed $\lambda \vdash n$.   

\begin{proposition} Let $n \ge 3$.  For any partition $\lambda \vdash n$, the sum of the dimensions of the $\tilde{T}$-modules corresponding to $\lambda^+$ and $\lambda^-$ (including the case when one of these is the zero module) is equal to the corresponding row sum $d_{\chi^{\lambda}}$ in the character table of $\sym{n}$.  
\end{proposition}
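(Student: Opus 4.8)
The plan is to compute each of the two module dimensions as a multiplicity in the permutation character of $H_1$ and then to collapse the pair onto a single $\sym{n}$-character by restriction. Write $\rho$ for the permutation character of $H_1 = \operatorname{Inn}(\sym{n}) \rtimes \langle \varphi\rangle$ acting on $V = \mathbb{C}\sym{n}$, and recall from Theorem~\ref{thm:point-stabilizer} that this group is $\sym{n} \times C_2$, where $\operatorname{Inn}(\sym{n}) \cong \sym{n}$ acts by conjugation and $\langle \varphi \rangle = C_2$ acts by inversion. By Lemma~\ref{lem:decomposition-centralizer}, the Wedderburn component of $\tilde{T}$ indexed by the signed partition $\lambda^{\pm}$ is $\operatorname{M}_{m_{\lambda^\pm}}(\mathbb{C})$ with $m_{\lambda^\pm} = \langle \rho, \chi^{\lambda^\pm}\rangle$, so the irreducible $\tilde{T}$-module attached to $\lambda^\pm$ has dimension exactly $m_{\lambda^\pm}$ (and is the zero module precisely when $m_{\lambda^\pm} = 0$, which is the degenerate case in the statement).

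First I would restrict $\rho$ to the subgroup $\sym{n} \times \{1\} = \operatorname{Inn}(\sym{n})$. Two observations drive the argument. On the character side, both $\chi^{\lambda^+} = \chi^\lambda \otimes \phi^2$ and $\chi^{\lambda^-} = \chi^\lambda \otimes \phi$ restrict to $\chi^\lambda$ on $\sym{n}$, since $\phi^2(1) = \phi(1) = 1$; hence writing $\rho = \sum_{\mu \vdash n} \left(m_{\mu^+}\chi^{\mu^+} + m_{\mu^-}\chi^{\mu^-}\right)$ and restricting gives
$$\rho\big|_{\sym{n}} = \sum_{\mu \vdash n} \left(m_{\mu^+} + m_{\mu^-}\right) \chi^\mu.$$
On the action side, the restriction of the $H_1$-action to $\operatorname{Inn}(\sym{n})$ is precisely the action of $\sym{n}$ on itself by conjugation, so $\rho\big|_{\sym{n}}$ equals the conjugation permutation character $\pi$ of $\sym{n}$.

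Combining these, I would take the inner product with $\chi^\lambda$ over $\sym{n}$: orthonormality of the irreducible characters of $\sym{n}$ gives $\langle \rho\big|_{\sym{n}}, \chi^\lambda\rangle = m_{\lambda^+} + m_{\lambda^-}$, while Lemma~\ref{lem:row-sum} identifies $\langle \pi, \chi^\lambda\rangle$ with the row sum $d_{\chi^\lambda} = \sum_{i=0}^d \chi^\lambda(g_i)$ of the character table of $\sym{n}$. Equating the two yields $m_{\lambda^+} + m_{\lambda^-} = d_{\chi^\lambda}$, which is the claim. The only point requiring care --- and the closest thing to an obstacle --- is the bookkeeping identifying $\rho\big|_{\operatorname{Inn}(\sym{n})}$ with the conjugation character and checking that inversion contributes nothing once the $C_2$-grading is forgotten; this is exactly where the hypothesis $Z(\sym{n}) = 1$ for $n \geq 3$ is used, since it guarantees $\operatorname{Inn}(\sym{n}) \cong \sym{n}$ and $\varphi \notin \operatorname{Inn}(\sym{n})$, giving the genuine direct factorization $H_1 \cong \sym{n}\times C_2$ that underlies the signed-partition labelling.
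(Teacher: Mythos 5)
Your proof is correct and is essentially the paper's argument in dual form: the paper restricts the irreducible $\End{\sym{n}}{V}$-module corresponding to $\lambda$ along the inclusion $\tilde{T} \subseteq \End{\sym{n}}{V}$ to get the sum of the $\lambda^+$- and $\lambda^-$-modules, while you restrict the permutation character of $H_1$ along $\operatorname{Inn}(\sym{n}) \hookrightarrow H_1$ and use that $\chi^{\lambda^+}$ and $\chi^{\lambda^-}$ both restrict to $\chi^{\lambda}$ --- the same structural fact, combined with Lemma~\ref{lem:row-sum}. No gaps; your explicit inner-product bookkeeping just makes the one-line module-restriction step of the paper concrete on the character side.
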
 

\begin{proof} 
Let $V$ be the standard module.  Since $H_1$ contains the subgroup $\sym{n}$ acting as inner automorphisms on the group basis of $V = \mathbb{C}\sym{n}$, we have that $\tilde{T} = \End{H_1}{V} \subseteq \End{\sym{n}}{V}$.  For the same reason, the irreducible $\End{\sym{n}}{V}$-module corresponding to any $\lambda \vdash n$ will restrict to the sum of the $\tilde{T}$-modules corresponding to $\lambda^+$ and $\lambda^-$, which are either irreducible or zero.  The proposition follows.   
\end{proof} 

By the result of Frumkin, we can conclude that at least one of the two $\tilde{T}$-modules corresponding to $\lambda^+$ and $\lambda^-$ is not zero.  For the partition $[n]$, the $\tilde{T}$-module corresponding to $[n]^+$ always restricts to the primary module of $T$, so its dimension is $p(n) = d_{\chi^{[n]}}$.  So, the $\tilde{T}$-module corresponding to $[n]^-$ is always the zero module. 
		
	\section{Proof of Theorem~\ref{thm:main}}
	Consider the group $\sym{7}$ with the conjugacy class scheme $\mathfrak{X}_7$ , and let $H=\Aut{\mathfrak{X}_7}$ be its automorphism group. As before, we let $T$ be the Terwilliger algebra of $\sym{7}$ and $\tilde{T} = \End{H_{1}}{V}$, where $V$ is the standard module.
	
	First, we consider the centralizer algebra $\tilde{T}$. 
	The degrees of the irreducible representations of the centralizer algebra $\tilde{T}$ were found by Bogaerts and Dukes.  The decomposition of the permutation character for the action of $H_1$ on $\sym{7}$ is: 
	
	\begin{align}
		\begin{split}
			\rho_{K_1} &= 15\chi^{[7]^+}+  15 \chi^{[6,1]^+} + 26\chi^{[5,2]^+} + 2\chi^{[5,1^2]^+} + 17\chi^{[5,1^2]^-} + 16\chi^{[4,3]^+} +2\chi^{[4,3]^-} \\
			& \hspace{0.5cm}  + 21 \chi^{[4,2,1]^+} + 15\chi^{[4,2,1]^-} + 2\chi^{ [4,1^3]^+} + 19\chi^{[4,1^3]^-} + 5\chi^{[3^2,1]^+} + 13\chi^{[3^2,1]^-} \\
			& \hspace{0.5cm}  + 20\chi^{[3,2^2]^+} + 2\chi^{[3,2^2]^-} + 8\chi^{[3,2,1^2]^+} + 20\chi^{ [3,2,1^2]^-} + 9\chi^{[3,1^4]^+} + 4\chi^{[3,1^4]^-} \\
			& \hspace{0.5cm}  + 9\chi^{[2^3,1]^+} + 3\chi^{[2^3,1]^-} + 3\chi^{[2^2,1^3]^+} + 7\chi^{[2^2,1^3]^-} + 5\chi^{[2,1^5]^+} + \chi^{[1^7]^-}.
		\end{split}\label{eq:perm-char-7}
	\end{align}
	We conclude from this decomposition that $\dim(\tilde{T}) = 4043.$

	Now, we explain the strategy of the proof of Theorem~\ref{thm:main}. As explained in Lemma~\ref{lem:max-ribbon-width}, to find the Terwilliger algebra, one just needs to find the smallest index for which the chain of subspaces $(T_i)_{i\geq 0}$ is stationary. It is clear that we can do this by looking at the block dimension decomposition of a subspace $T_i$, and checking whether some entries are different from the ones in $T_{i-1}$.
	
	Since the conjugacy classes of $\sym{n}$ are determined by partitions of $n$, we let $A_\lambda$, for $\lambda \vdash n$, be the adjacency matrix of the Cayley graph of $\sym{n}$ with connection set equal to the conjugacy class corresponding to $\lambda$.
	
	 First, we determine the dimension of $T_0$, by counting the number of non-zero intersection numbers of $\mathfrak{X}_7$. From this, we have $\dim(T_0) = 1232$, and the block $E^*_{[6,1]}T_0E^*_{[6,1]}$ is not closed under multiplication, thus $T_0 \subsetneq T_1$. 
	 
	 Next, we determine the block dimension decomposition of $T_1$ which is given in Table~\ref{tab:T1}. From this, we know that $\dim(T_1) = 4036$. However, ${T}_1$ is not equal to $T$ since $82 = \dim(E_{[6,1]}^*T_1E_{[6,1]}^*)<\dim(E_{[6,1]}^*T_2E_{[6,1]}^*) = 83$, which implies that $T_1\ \subsetneq T_2$. Since $\dim(\tilde{T}) = 4043$, $\dim(T_1) = 4036$, and $T _1\subsetneq T_2\subset T \subset \tilde{T}$, there can only be small gaps between the dimensions of $E_\lambda^*T_1E_{\mu}^*$ and $E_\lambda^*T_2E_{\mu}^*$, for any $\lambda,\mu\vdash 7$. In fact, the only blocks where these two dimensions differ are in the blocks corresponding to the partitions $[6,1]$ and $[7]$. The part of the block dimension decomposition of $T_2$ corresponding to these two blocks is given in Table~\ref{tab:dim-T-2-sym7}.
	 \begin{table}[H]
	 	\centering
	 	\begin{tabular}{c||cc}
	 		$T_2$ & $[6,1]$ & $[7]$\\ \hline\hline
	 		$[6,1]$&$82+1$& $70+1$\\
	 		$[7]$& $70+1$&$77$\\
	 	\end{tabular}
	 	\caption{Dimension growth between $T_1$ and $T_2$.}\label{tab:dim-T-2-sym7}
	 \end{table}
	Consequently, $\dim(T_2) = 4039$. Now, we claim that $T_2 = T_3$. By contradiction, assume that $T_2 \subsetneq T_3$, and let $X \in T_3\setminus T_2$. Then, by definition, we have
	\begin{align*}
			T_3 = \operatorname{Span}_{\mathbb{C}} \Sigma
	\end{align*}
	where 
	\begin{align*}
		\Sigma = \left\{ \prod_{t=0}^{3} (E_{\lambda_t}^*A_{\alpha_t}E^*_{\mu_t}): \lambda_u,\alpha_u,\mu_u \vdash 7, \ \mbox{ for }u\in \{0,1,2,3\} \right\}.
	\end{align*}
	Since $X\in T_3\setminus T_2$, any expression of $X$ as a linear combination of elements of $\Sigma$ must contain a scalar multiple of an element of the form
	\begin{align*}
		\prod_{t=0}^{3} (E_{\lambda_t}^*A_{\alpha_t}E^*_{\mu_t}) \in T_3\setminus T_2
	\end{align*}
	where $\lambda_u,\alpha_u,\mu_u \vdash 7, \ \mbox{ for }u\in \{0,1,2,3\}$,
	otherwise, $X$ would be contained in $T_2.$ 
	
	Given  $\lambda_u,\alpha_u,\mu_u \vdash 7, \ \mbox{ for }u\in \{0,1,2,3\}$, if
	\begin{align}
		\prod_{t=0}^{3} (E_{\lambda_t}^*A_{\alpha_t}E^*_{\mu_t}) =(E_{\lambda_0}^*A_{\alpha_0}E^*_{\mu_0})(E_{\lambda_1}^*A_{\alpha_1}E^*_{\mu_1})(E_{\lambda_2}^*A_{\alpha_2}E^*_{\mu_2})(E_{\lambda_3}^*A_{\alpha_3}E^*_{\mu_3})\in T_3\setminus T_2,\label{eq:prod-4}
	\end{align}
	then 
	\begin{align*}
		(E_{\lambda_1}^*A_{\alpha_1}E^*_{\mu_1})(E_{\lambda_2}^*A_{\alpha_2}E^*_{\mu_2})(E_{\lambda_3}^*A_{\alpha_3}E^*_{\mu_3}) \in T_2\setminus T_1,
	\end{align*}
	otherwise \eqref{eq:prod-4} would belong to $T_2$. Using Table~\ref{tab:dim-T-2-sym7}, it is clear that
	\begin{align*}
		\left(\lambda_1,\mu_3\right) \in \left\{\left([6,1],[6,1]\right),\left([6,1],[7]\right),\left([7],[6,1]\right)\right\}.
	\end{align*}
	Now, we consider these three possibilities for $\left(\lambda_1,\mu_3\right)$.
	
	Assume that $\left(\lambda_1,\mu_3\right) = \left([6,1],[6,1]\right)$. Then, \eqref{eq:prod-4} becomes
	\begin{align}
		(E_{\lambda_0}^*A_{\alpha_0}E^*_{[6,1]})(E_{[6,1]}^*A_{\alpha_1}E^*_{\mu_1})(E_{\lambda_2}^*A_{\alpha_2}E^*_{\mu_2})(E_{\lambda_3}^*A_{\alpha_3}E^*_{[6,1]}).\label{eq:prod-4-2}
	\end{align}
	However, for all $\lambda_0,\alpha_0 \vdash 7$, we have checked that any element of the form \eqref{eq:prod-4-2} belongs to $T_2$. The latter is a contradiction to the element in \eqref{eq:prod-4} belonging to $T_3\setminus T_2$.
	
	For the cases $\left(\lambda_1,\mu_3\right) =  \left([6,1],[7]\right)$ and $\left(\lambda_1,\mu_3\right) = \left([7],[6,1]\right)$, we use a similar argument to get a contraction on the element in \eqref{eq:prod-4} being in $T_3\setminus T_2$.

	From these cases, we deduce the element $X \in T_3\setminus T_2$ does not exist.
	We conclude that $T_2 = T_3 = T$, and therefore, $\dim(T) = 4039.$
	
	Finally, we determine the Wedderburn decomposition of $T$. Similar to the case for $\sym{6}$, a Wedderburn component of $\tilde{T}$ corresponding to irreducible characters in \eqref{eq:perm-char-7} with multiplicity larger than $2$ also determines a Wedderburn component of $T$. Consequently, the remaining Wedderburn components of $T$ come from a combination of centrally primitive idempotents of $\tilde{T}$ corresponding to the following partitions
	\begin{align}
		\chi^{[5,1^2]^+},\chi^{[4,3]^-}, \chi^{[4,1^3]^+},\chi^{[3,2^2]^-},\mbox{ and }\chi^{[1^7]^-}.
	\end{align}
	The centrally primitive idempotents $\tilde{e}_{\chi^{[5,1^2]^+}},\tilde{e}_{\chi^{[4,3]^-}},$ and $\tilde{e}_{\chi^{[1^7]^-}}$ of $\tilde{T}$ lie in $T$, whereas $\tilde{e}_{\chi^{[4,1^3]^+}}$ and $\tilde{e}_{\chi^{[3,2^2]^-}}$ do not. By Lemma~\ref{lem:cpi}, depending on whether they are central, $\tilde{e}_{\chi^{[5,1^2]^+}},\tilde{e}_{\chi^{[4,3]^-}},$ and $\tilde{e}_{\chi^{[1^7]^-}}$ are centrally primitive idempotents of $T$ or they decompose as sums of centrally primitive idempotent of $T$. The idempotents $\tilde{e}_{\chi^{[5,1^2]^+}},\tilde{e}_{\chi^{[4,3]^-}},$ and $\tilde{e}_{\chi^{[1^7]^-}}$ correspond to irreducible $T$-modules of dimensions $2,2$, and $1$, respectively. Therefore, their corresponding Wedderburn components are $\operatorname{M}_2(\mathbb{C}), \operatorname{M}_2(\mathbb{C})$, and $\mathbb{C}$, respectively.
	
	As the two centrally primitive idempotents $\tilde{e}_{\chi^{[4,1^3]^+}}$ and $\tilde{e}_{\chi^{[3,2^2]^-}}$ of $\tilde{T}$ do not belong to $T$, we use Lemma~\ref{lem:cpi}\eqref{third}.  When we do this, we find that the sum $e$ of $\tilde{e}_{\chi^{[4,1^3]^+}}$ and $\tilde{e}_{\chi^{[3,2^2]^-}}$ lies in $T$.  Therefore, $Te$ embeds diagonally in $\tilde{T}\tilde{e}_{\chi^{[4,1^3]^+}} \oplus \tilde{T}\tilde{e}_{\chi^{[3,2^2]^-}}$, and this accounts for the entire difference in dimensions between $T$ and $\tilde{T}$.   So, this sum will be a centrally primitive idempotent of $T$ whose corresponding Wedderburn component is of dimension equal to $\dim(\tilde{T}\tilde{e}_{\chi^{[3,2^2]^-}}) = \dim(\tilde{T}\tilde{e}_{\chi^{[3,2^2]^-}}) = 2^2 = 4$. We conclude that the Wedderburn decomposition of $T$ is 
	\begin{align*}	
		T &= \operatorname{M}_{15}(\mathbb{C})
		\oplus\operatorname{M}_{15}(\mathbb{C})
		\oplus\operatorname{M}_{26}(\mathbb{C})\oplus\operatorname{M}_{2}(\mathbb{C})
		\oplus\operatorname{M}_{17}(\mathbb{C})
		\oplus\operatorname{M}_{16}(\mathbb{C})
		\oplus\operatorname{M}_{2}(\mathbb{C})
		\oplus\operatorname{M}_{21}(\mathbb{C}
		)
		\\& \hspace{0.5cm} \oplus\operatorname{M}_{15}(\mathbb{C}) \oplus \operatorname{M}_{2}(\mathbb{C}) \oplus\operatorname{M}_{19}(\mathbb{C})
		\oplus\operatorname{M}_{5}(\mathbb{C})\oplus\operatorname{M}_{13}(\mathbb{C})
		\oplus\operatorname{M}_{20}(\mathbb{C})
		\oplus\operatorname{M}_{2}(\mathbb{C})
		\oplus \operatorname{M}_{8}(\mathbb{C})\oplus\operatorname{M}_{20}(\mathbb{C}) \\& \hspace{0.5cm}
		\oplus\operatorname{M}_{9}(\mathbb{C})
		\oplus\operatorname{M}_{4}(\mathbb{C})
		\oplus\operatorname{M}_{9}(\mathbb{C})
		\oplus\operatorname{M}_{3}(\mathbb{C})
		\oplus\operatorname{M}_{3}(\mathbb{C})\oplus\operatorname{M}_{7}(\mathbb{C})\oplus\operatorname{M}_{5}(\mathbb{C})\oplus \mathbb{C}.	
	\end{align*}

\begin{remark} As we did for the smaller symmetric groups, we can determine the thin irreducible $T$-(and $\tilde{T}$-)modules.  In the case of $\sym{7}$, all of the irreducible $T$-modules of dimension $\le 5$ and the two $15$-dimensional irreducible $T$-modules corresponding to $[7]^+$ and $[6,1]^+$ turn out to be thin, and all others are not thin.   
\end{remark}

\section{Conclusion}
In this paper, we determined the Terwilliger algebra of the conjugacy class association scheme of $\sym{7}$. In particular, we found its dimension and its Wedderburn decomposition. We also revisited some of the known results for $3\leq n\leq 6$ using some newer methods.  

The case for $n\geq 8$ is still open. However, we might be able to tackle this problem by refining some of the methods in this paper. Based on what we have seen in these small cases, it seems that one of the blocks where the dimensions of $T$ and $\tilde{T}$ differ is at the entry $[n-1]\times [n-1]$. If this observation holds for all $n\geq 8$, then this is enough to show that $T\subsetneq \tilde{T}$, and this would entirely disprove the conjecture of Bogaerts and Dukes for all $n\geq 8$. We make the following conjecture.

\begin{conjecture}
	For any $n\geq 6$, we have
	\begin{align*}
		\dim(E^*_{[n-1,1]}TE^*_{[n-1,1]}) < \dim(E^*_{[n-1,1]}\tilde{T}E^*_{[n-1,1]}).
	\end{align*} 
	In particular, $T\subsetneq \tilde{T}.$
\end{conjecture}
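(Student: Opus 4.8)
The plan is to recast the inequality in terms of the gluing mechanism that already governs the gap between $T$ and $\tilde{T}$ in the $\sym{6}$ and $\sym{7}$ computations, and then to localize the resulting deficiency at the class $C_{[n-1,1]}$ of $(n-1)$-cycles. Write $C = C_{[n-1,1]}$ and $E^* = E^*_{[n-1,1]}$. By Lemma~\ref{lem:cpi}\eqref{third}, the difference $\dim(\tilde{T}) - \dim(T)$ is accounted for by pairs of centrally primitive idempotents $(\tilde{e}_{\chi_1},\tilde{e}_{\chi_2})$ of $\tilde{T}$ whose sum lies in $T$; for each such pair the two $\tilde{T}$-modules $W_{\chi_1}, W_{\chi_2}$ restrict to a single irreducible $T$-module $W$, and since $E^*_\lambda \in T$ this forces $\dim E^*_\lambda W_{\chi_1} = \dim E^*_\lambda W_{\chi_2} =: s_\lambda$ for every $\lambda \vdash n$. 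Comparing the corner $M_m(\mathbb{C})\oplus M_m(\mathbb{C})$ in $\tilde{T}$ with the single glued corner $M_m(\mathbb{C})$ in $T$ then gives
\begin{align*}
	\dim(E^*\tilde{T}E^*) - \dim(E^*TE^*) = \sum_{\text{glued pairs}} s_{[n-1,1]}^2 .
\end{align*}
Hence the conjecture is equivalent to the assertion that for every $n \ge 6$ there is at least one glued pair whose common $T$-module $W$ satisfies $E^*W \ne 0$. Equivalently, for such a pair the matrix $Z = E^*(\tilde{e}_{\chi_1} - \tilde{e}_{\chi_2})E^*$ is a nonzero $H_1$-invariant matrix on $C\times C$ which, because the $\tilde{e}_{\chi_i}$ are central in $\tilde{T}$ and $W_{\chi_1}\cong W_{\chi_2}$ over $T$, is orthogonal in the trace form to every switching product $E^*(\cdots)E^*$; it is this explicit defect matrix that I would exhibit to witness $E^*TE^* \subsetneq E^*\tilde{T}E^*$.

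The argument then splits into two tasks. First I would establish the existence of a glued pair for all $n \ge 6$: locate two characters $\chi_1,\chi_2 \in \operatorname{Irr}(H_1)$ of equal multiplicity $m$ in the permutation character $\rho_{H_1}$ and show that their $\tilde{T}$-modules become isomorphic upon restriction to $T$. Guided by the pairs found for $\sym{6}$ (for example $([4,1^2]^+,[5,1]^-)$) and for $\sym{7}$ (namely $([4,1^3]^+,[3,2^2]^-)$), the aim is to identify a uniform family of such pairs and to prove the $T$-module isomorphism by matching the action of the switching generators $E^*_iA_jE^*_k$ on the two modules. Second, I would verify the nonvanishing $E^*W \ne 0$: this amounts to showing that the glued characters occur with positive multiplicity in the permutation character $\pi_C$ of $H_1$ acting on $C$. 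Here the fixed-point fibration $\pi\colon C \to \{1,\dots,n\}$, $\sigma \mapsto \operatorname{fix}(\sigma)$, which is $\sym{n}$-equivariant and $\varphi$-invariant, is the natural tool: it controls the decomposition of $\mathbb{C}C$ and reduces the computation of $\langle \pi_C,\chi\rangle$ to counting $(n-1)$-cycles fixed, up to inversion, by conjugation, which can be handled by a Burnside/Murnaghan--Nakayama computation.

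The main obstacle is the first task. Character theory supplies only the \emph{necessary} condition that the two $\tilde{T}$-modules have equal dimension; the \emph{sufficient} condition, that they coincide as $T$-modules, is a statement about the Terwilliger algebra itself, namely that the span of switching products is too small to separate $\chi_1$ from $\chi_2$. This is precisely the phenomenon that is verified by machine for $n \le 7$ but remains open for $n \ge 8$, so a uniform proof (which would in particular disprove the conjecture of Bogaerts and Dukes for all $n$) appears to require a genuinely new input, such as a stabilization argument along the fibers of $\pi$ as $n$ grows, or the explicit construction of an intertwiner realizing $W_{\chi_1} \cong W_{\chi_2}$ over $T$. I would expect the nonvanishing in the second task to be comparatively routine once the glued characters are pinned down, although its proof must still be carried out uniformly in $n$ because the relevant pair of characters varies with $n$.
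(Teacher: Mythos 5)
The statement you are addressing is a \emph{conjecture} in the paper: the authors give no proof of it, only the computational verification for $n=6$ and $n=7$ contained in Theorem~\ref{thm:sym-5} and the proof of Theorem~\ref{thm:main}, and they explicitly leave $n\ge 8$ open. Your proposal is likewise not a proof, and you say so yourself: the existence, for every $n\ge 6$, of a ``glued pair'' of characters whose $\tilde{T}$-modules become isomorphic upon restriction to $T$ already implies $T\subsetneq\tilde{T}$, which is the substance of the conjecture, and your plan to establish it by ``matching the action of the switching generators'' is precisely the step you concede requires genuinely new input. So what you have written is a reasonable reduction and research plan, consistent with how the paper treats $n=6,7$, but it leaves the mathematical content of the conjecture untouched.

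Two points on the reduction itself. First, your identity $\dim(E^*\tilde{T}E^*)-\dim(E^*TE^*)=\sum s_{[n-1,1]}^2$, and hence the claimed ``equivalence,'' is only valid under the unproven structural hypothesis that the branching of irreducible $\tilde{T}$-modules over $T$ consists of the identity on most components together with pairwise gluings of components of equal multiplicity. Lemma~\ref{lem:cpi} permits other behaviour --- a centrally primitive idempotent of $\tilde{T}$ lying in $T$ but splitting there as in case (ii), or an irreducible $T$-module meeting more than two components of $\tilde{T}$, or meeting one with multiplicity greater than one --- and in those cases the block-dimension deficit at $[n-1,1]$ is not given by your formula. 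For $n=6$ and $n=7$ the pairwise-gluing pattern is a computational observation, not a theorem. Second, granting that pattern, your second task (showing $E^*W\neq 0$ for a glued pair by computing the multiplicity of the glued characters in the permutation character of $H_1$ on the class of $(n-1)$-cycles) is indeed the right way to localize the defect at the $[n-1,1]$ block and is the tractable part; but it cannot be carried out until the glued characters are identified uniformly in $n$, which returns you to the open step. In short: a correctly framed strategy, but no proof --- and no proof exists in the paper either, since the statement is posed there as an open conjecture.
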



		\appendix
		\newpage
		
		\pagestyle{empty}
		\begin{landscape}
			\section{Block dimensions for $\sym{7}$}
			\vspace*{3cm}
			\begin{table}[H]
				\centering
				{
					\begin{tabular}{c||ccccccccccccccc}
						${T}_1$& $\left[1^7\right]$ & $\left[2, 1^5\right]$ & $\left[2, 2, 1^3\right]$ & $\left[2^3, 1\right]$ & $\left[3, 1^4\right]$ & $\left[3, 2, 1^2\right]$ & $\left[3, 2^2\right]$ & $\left[3^2, 1\right]$ & $\left[4, 1^3\right]$ & $\left[4, 2, 1\right]$ & $\left[4, 3\right]$ & $\left[5, 1^2\right]$ & $\left[5, 2\right]$ & $\left[6, 1\right]$ & $\left[7\right]$ \\ \hline \hline
						$\left[1^7\right]$ & 1& 1 & 1 & 1 & 1  & 1 & 1 & 1 & 1 & 1  & 1 & 1 & 1 & 1 & 1 \\ 
						$\left[2, 1^5\right]$ & 1 & 3& 4 & 3 & 3  & 6 & 4 & 3 & 4 & 6  & 4 & 4 & 4 & 4 & 3 \\
						$\left[2^2, 1^3\right]$ & 1 & 4 &  9 & 7 & 5  &14 & 9 & 7 & 9 &16  &10 &11 &11 &14 &12 \\
						$\left[2^3, 1\right]$ & 1 & 3 & 7 & 7 & 4  &11 & 7 & 7 & 7 &14  & 9 &10 &10 &14 &11 \\
						$\left[3, 1^4\right]$ & 1 & 3 & 5 & 4 & 6  &11 & 7 & 7 & 7 &11  & 9 & 9 & 9 &11 & 8 \\
						$\left[3, 2, 1^2\right]$ & 1 & 6 &14 &11 &11  &38&21 &20 &20 &44  &28 &32 &32 &44 &36 \\
						$\left[3, 2^2\right]$ & 1 & 4 & 9 & 7 & 7  &21 &14 &11 &12 &24  &18 &19 &19 &25 &24 \\
						$\left[3^2, 1\right]$ & 	1 & 3 & 7 & 7 & 7  &20 &11 & 18&12 &25  &21 &22 &22 &33 &24 \\
						$\left[4, 1^3\right]$ & 1 & 4 & 9 & 7 & 7  &20 &12 &12 &13 &24  &17 &19 &19 &25 &21 \\
						$\left[4, 2, 1\right]$ & 1 & 6 &16 &14 &11  &44 &24 &25 &24 & 58  &36 &42 &42 &62 &51 \\
						$\left[4, 3\right]$ & 1 & 4 &10 & 9 & 9  &28 &18 &21 &17 &36  & 32 &33 &33 &47 &42 \\
						$\left[5, 1^2\right]$ & 1 & 4 &11 &10 & 9  &32 &19 &22 &19 &42  &33 &40 &40 &54 &48 \\
						$\left[5, 2\right]$ & 1 & 4 &11 &10 & 9  &32 &19 &22 &19 &42  &33 &40 & 40 &54 &48 \\
						$\left[6, 1\right]$ & 1 & 4 &14 &14 &11  &44 &25 &33 &25 &62  &47 &54 &54 &82 &70 \\
						$\left[7\right]$ & 1 & 3 &12 &11 & 8  &36 &24 &24 &21 &51  &42 &48 &48 &70 &77
					\end{tabular}
				}\caption{Block dimension decomposition of $T_1$ for $\sym{7}$}\label{tab:T1}
			\end{table}
	    	\end{landscape}

	\end{document}